\pgfplotsset{compat=1.15}
\pgfplotsset{ticks = none}
\newcommand*{\N}{\mathbb{N}}
\newcommand*{\Z}{\mathbb{Z}}
\newcommand*{\R}{\mathbb{R}}
\renewcommand{\phi}{\varphi}
\renewcommand{\epsilon}{\varepsilon}
\renewcommand{\theta}{\vartheta}
\newtheorem{lem}{Lemma}[section]
\newtheorem{prop}{Proposition}[section]
\newtheorem{thm}{Theorem}[section]
\newcommand*{\desc}[3]{{\mathsf{des}}_{#1}(#2,#3)}
\newcommand*{\SG}{\mathsf{SG}}
\newcommand{\E}{\mathbb{E}}
\newcommand{\Prob}{\mathbb{P}}
\newcommand{\LE}{LE}
\newcommand{\nRole}[2]{{\eta}_{#1}^{(#2)}}
\newcommand{\noRole}[2]{{\overline{\eta}}_{#1}^{(#2)}}
\newcommand{\cornersymbol}{A}
\newcommand{\lcorner}[1]{\cornersymbol^{#1}_1}
\newcommand{\tcorner}[1]{\cornersymbol^{#1}_2}
\newcommand{\rcorner}[1]{\cornersymbol^{#1}_3}
\newcommand{\cutpointsymbol}{a}
\newcommand{\rcut}[1]{\cutpointsymbol^{#1}_1}
\newcommand{\bcut}[1]{\cutpointsymbol^{#1}_2}
\newcommand{\lcut}[1]{\cutpointsymbol^{#1}_3}
\newcommand{\probCorner}[2]{p_{#1}^{(#2)}}
\newcommand{\probVerticesSymbol}{p}
\newcommand{\probVertices}[2]{\probVerticesSymbol_{#1}^{(#2)}}
\newcommand{\onecomp}{\begin{tikzpicture}\fill[black] (0,0) -- (1/3,0)  -- (1/6,1.732/6);\end{tikzpicture}}
\newcommand{\twocompleft}{        \begin{tikzpicture}
            \fill[black] (2/9,0) -- (1/3,0) -- (1/6,1.732/6) -- (1/6-1/18,1.732/6-1.732/18);
            \fill[black] (0,0) -- (1/9,0) -- (1/18,1.732/18);
        \end{tikzpicture}}
\newcommand{\twocompup}{\begin{tikzpicture}
            \fill[black] (0,0) -- (1/3,0) -- (1/3-1/18,1.732/18) -- (1/18,1.732/18);
            \fill[black] (1/6,1.732/6) -- (1/6-1/18,1.732/6-1.732/18) -- (1/6+1/18,1.732/6-1.732/18);
        \end{tikzpicture}}
\newcommand{\twocompright}{\begin{tikzpicture}
                \fill[black] (0,0) -- (1/9,0) -- (2/9,1.732/9) -- (1/6,1.732/6);
                \fill[black] (1/3,0) -- (2/9,0) -- (1/18+2/9,1.732/18);
        \end{tikzpicture}}
\newcommand{\threecomp}{\begin{tikzpicture}
                \fill[black] (0,0) -- (1/9,0) -- (1/18,1.732/18);
                \fill[black] (1/6-1/18,1.732/6-1.732/18) -- (1/6,1.732/6) -- (1/6+1/18,1.732/6-1.732/18);
                \fill[black] (1/3,0) -- (2/9,0) -- (1/18+2/9,1.732/18);
        \end{tikzpicture}}
\newcommand{\treeSet}[1]{\mathcal{T}_{#1}}
\newcommand{\twoCompSetOne}[1]{\mathcal{S}_{#1}^1}
\newcommand{\twoCompSetTwo}[1]{\mathcal{S}_{#1}^2}
\newcommand{\twoCompSetThree}[1]{\mathcal{S}_{#1}^3}
\newcommand{\threeCompSet}[1]{\mathcal{R}_{#1}}
\title{Average height for Abelian sandpiles and the looping constant on Sierpi\'nski graphs}
\date{\today}
\author{Nico Heizmann, Robin Kaiser, Ecaterina Sava-Huss}
\begin{document}
\maketitle

\begin{abstract}
For the Abelian sandpile model on Sierpi\'nski graphs, we investigate several statistics such as average height, height probabilities and looping constant. In particular, we calculate the expected average height of a recurrent sandpile on the finite iterations of the Sierpi\'nski gasket and we also give an algorithmic approach for calculating the height probabilities of recurrent sandpiles under stationarity by using the connection between recurrent configurations of the Abelian sandpile Markov chain and uniform spanning trees. We also calculate the expected fraction of vertices of height $i$ for $i\in\{0,1,2,3\}$ of sandpiles under stationarity and relate the bulk average height to the looping constant on the Sierpi\'nski gasket.
\end{abstract}

\textit{2020 Mathematics Subject Classification.} 60J10, 60J45, 05C81, 31E05

\textit{Keywords}: Abelian sandpile, uniform spanning trees, stabilization, toppling, Sierpi\'nski gasket, height probabilities, recurrent configurations, looping constant, burning bijection.

\section{Introduction}

The Abelian sandpile model has its origins in \cite{btw}, where it was first introduced by Bak, Tang and Wiesenfeld as a model of self-organized criticality. Later on, Dhar \cite{dhar-burning} generalized the model to arbitrary finite graphs and called it \emph{the Abelian sandpile model}. He also investigated the algebraic structure of addition operators and described an one-to-one correspondence between the set of recurrent sandpiles and the set of spanning trees of the underlying graph; this correspondence is known under the name burning bijection or burning algorithm.
The model has since seen impressive progress on different state spaces, mostly on Euclidean lattices, where some of the hypotheses stemming from simulations of physicists have been meanwhile proven. See \cite{abelian-sandpile-desc} for an excellent survey on this matter. Other state spaces have not received the same amount of attention, and many questions still remain open. For instance, physicists have made predictions about the behavior of the Abelian sandpile model on the Sierpi\'nski graph more than 20 years; see \cite{physics1,physics2,physics3}. However, mathematically the Abelian sandpile model on state spaces of fractal nature is poorly understood. The limit shape of the Abelian sandpile model, when adding $n$ particles to the origin of the infinite 
Sierpi\'nski graph is investigated in \cite{chen-sandpile-limit-shape} and results concerning the group structure of the sandpile group have been considered in \cite{sandpile-group-gasket}, while the scaling limit of the identity element has been investigated in \cite{scaling-limit-gasket}.

One of the objects of interest in the study of Abelian sandpiles is the {\it sandpile Markov chain}, which in defined as follows. Consider a finite connected graph $G=(V\cup\{s\},E)$ with a distinguished vertex $s$ called the sink. Assign to each vertex $v\in V$ a natural number $\sigma(v)\in\mathbb{N}$  representing its mass, or the sandpile at $v$. We choose at every discrete time step a vertex $v\in V$ uniformly at random and add mass $1$ to it. If the resulting mass at $v$ is at least the number of neighbors of $v$, then we \emph{topple} $v$ by sending unit mass to each neighbor of $v$. Mass can leave the system through the sink, and the topplings will continue until all vertices are stable, that is, they have mass smaller than the number of neighbors. The sequence of consecutive topplings is called \emph{avalanche}. The processes of adding mass uniformly at random and toppling until having only stable vertices is a Markov chain on the finite set of stable configurations and the unique stationary measure for this Markov chain is the uniform distribution on the set of recurrent configurations. This set, together with the operation of pointwise addition followed by stabilization is a group, called the \emph{sandpile group} or the \emph{critical group}. There are various interesting questions in this context, for instance the size of an avalanche or the diameter distribution depending on the underlying graph.

\begin{figure}
    \centering
    \begin{subfigure}[t]{0.3\linewidth}
        \centering
        \resizebox{\linewidth}{!}{
        \begin{tikzpicture}[baseline=9ex]
        \node[shape=circle,draw=black,label=left:$\lcorner{0}$] (A) at (0,0) {};
        \node[shape=circle,draw=black,label=right:$\rcorner{0}$] (B) at (4,0) {};
        \node[shape=circle,draw=black,label=above:$\tcorner{0}$] (E) at (2,1.73*2) {};
        
        
        \path [-] (A) edge node[left] {} (B);
        \path [-] (A) edge node[left] {} (E);
        \path [-] (B) edge node[left] {} (E);
        \end{tikzpicture}
        }
    \end{subfigure}
    \begin{subfigure}[t]{0.3\linewidth}
        \centering
        \resizebox{\linewidth}{!}{
        \begin{tikzpicture}[baseline=9ex]
        \node[shape=circle,draw=black,label=left:$\lcorner{1}$] (A) at (0,0) {};
        \node[shape=circle,draw=black,label=below:$\bcut{1}$] (B) at (2,0) {};
        \node[shape=circle,draw=black,label=right:$\rcorner{1}$] (C) at (4,0) {};
        \node[shape=circle,draw=black,label=above left:$\lcut{1}$] (D) at (1,1.73) {};
        \node[shape=circle,draw=black,label=above right:$\rcut{1}$] (E) at (3,1.73) {};
        \node[shape=circle,draw=black,label=above:$\tcorner{1}$] (F) at (2,1.73*2) {} ;
        
        
        \path [-] (A) edge node[left] {} (B);
        \path [-] (A) edge node[left] {} (D);
        \path [-] (B) edge node[left] {} (D);
        \path [-] (B) edge node[left] {} (C);
        \path [-] (E) edge node[left] {} (B);
        \path [-] (E) edge node[left] {} (C);
        \path [-] (F) edge node[left] {} (E);
        \path [-] (F) edge node[left] {} (D);
        \path [-] (E) edge node[left] {} (D);
        \end{tikzpicture}}
    \end{subfigure}
    \begin{subfigure}[t]{0.3\linewidth}
        \centering
        \resizebox{\linewidth}{!}{
        \begin{tikzpicture}[baseline=9ex]
        \node[shape=circle,draw=black,label=left:$\lcorner{2}$] (A) at (0,0) {};
        \node[shape=circle,draw=black,label=below:$\bcut{2}$] (B) at (2,0) {};
        \node[shape=circle,draw=black,label=right:$\rcorner{2}$] (C) at (4,0) {};
        \node[shape=circle,draw=black,label=above left:$\lcut{2}$] (D) at (1,1.73) {};
        \node[shape=circle,draw=black,label=above right:$\rcut{2}$] (E) at (3,1.73) {};
        \node[shape=circle,draw=black,label=above:$\tcorner{2}$] (F) at (2,1.73*2) {} ;
        \node[shape=circle,draw=black] (G) at (1,0) {};
        \node[shape=circle,draw=black] (H) at (3,0) {};
        \node[shape=circle,draw=black] (I) at (1/2,1.73/2) {};
        \node[shape=circle,draw=black] (J) at (3/2,1.73/2) {};
        \node[shape=circle,draw=black] (K) at (5/2,1.73/2) {};
        \node[shape=circle,draw=black] (L) at (7/2,1.73/2) {};
        \node[shape=circle,draw=black] (M) at (3/2,1.73/2+1.73) {};
        \node[shape=circle,draw=black] (N) at (5/2,1.73/2+1.73) {};
        \node[shape=circle,draw=black] (O) at (2,1.73) {};
        
        
        \path [-] (A) edge node[left] {} (G);
        \path [-] (B) edge node[left] {} (G);
        \path [-] (A) edge node[left] {} (I);
        \path [-] (D) edge node[left] {} (I);
        \path [-] (B) edge node[left] {} (J);
        \path [-] (D) edge node[left] {} (J);
        \path [-] (I) edge node[left] {} (J);
        \path [-] (I) edge node[left] {} (G);
        \path [-] (J) edge node[left] {} (G);
        
        \path [-] (B) edge node[left] {} (H);
        \path [-] (C) edge node[left] {} (H);
        \path [-] (E) edge node[left] {} (K);
        \path [-] (E) edge node[left] {} (L);
        \path [-] (B) edge node[left] {} (K);
        \path [-] (C) edge node[left] {} (L);
        \path [-] (H) edge node[left] {} (K);
        \path [-] (H) edge node[left] {} (L);
        \path [-] (L) edge node[left] {} (K);
        
        \path [-] (F) edge node[left] {} (M);
        \path [-] (F) edge node[left] {} (N);
        \path [-] (E) edge node[left] {} (N);
        \path [-] (E) edge node[left] {} (O);
        \path [-] (D) edge node[left] {} (O);
        \path [-] (D) edge node[left] {} (M);
        \path [-] (M) edge node[left] {} (N);
        \path [-] (O) edge node[left] {} (N);
        \path [-] (O) edge node[left] {} (M);
        \end{tikzpicture}}
    \end{subfigure}
    \caption{The graphs $\SG_0$, $\SG_1$ and $\SG_2$.}
    \label{fig:first-3-it-SG}
\end{figure}

For a recurrent configuration chosen uniformly at random,  it is also of interest to understand the height distribution at some fixed vertex, that is, \emph{the height probabilities}. These height probabilities have been investigated on several state spaces so far. For instance on $\mathbb{Z}^2$, Priezzhev \cite{heights-on-zd} gave exact formulas for the height probabilities for the heights $1,2$ and $3$ in terms of rational polynomials in $1/\pi$ and multiple integrals; for a direct calculation of these integrals see \cite{exact_integral}. The ideas from \cite{heights-on-zd} have been extended in \cite{priezzhev-extension} to express the height probabilities in terms of a single integral, where also a simple formula for the height probabilities was conjectured. Using the connection between the average height of sandpiles and the looping constant as in \cite{loop-conn}, together with the computation of the looping constant in \cite{loop-const1} and in \cite{loop-const2}
confirms the conjecture from \cite{priezzhev-extension}, and the height probabilities on $\mathbb{Z}^2$ are given by:
$p_0=\frac{2}{\pi^2}-\frac{4}{\pi^3}$, $p_1=\frac{1}{4}-\frac{1}{2\pi}-\frac{3}{\pi^2}+\frac{12}{\pi^3}$, $p_2=\frac{3}{8}+\frac{1}{\pi}-\frac{12}{\pi^3}$, and finally $p_3=\frac{3}{8}-\frac{1}{2\pi}+\frac{1}{\pi^2}+\frac{4}{\pi^3}$.
The height probabilities for sandpiles on regular trees were calculated in \cite{bethe}.

The current work focuses on the height probabilities and expected height of recurrent sandpiles on the $n$-th level of the Sierpi\'nski graph, denoted by $\SG_n$, for  every $n\in\N$. See Section \ref{sec:Def-SG} for the precise definition of $\SG_n$ and Figure \ref{fig:first-3-it-SG} for an illustration of the first three levels. The methods used in \cite{heights-on-zd}  are not applicable to Sierpi\'nski graphs, but we use instead the connection between recurrent sandpiles and  spanning trees and forests of $\SG_n$, and this connection enables also to calculate the average number of vertices of a given height under stationarity; see Proposition \ref{prop:height-sandpiles} for details. Spanning trees and forests on $\SG_n$ exhibit a recursive structure as proven in \cite{ust-on-gasket}.
A similar recursive structure was also used to calculate the height probabilities on trees in \cite{bethe}. Note that, in contrast to regular trees \cite{bethe}, in our case the recursive structure is only apparent in the spanning trees. This is why our approach uses the burning bijection of \cite{dhar-burning}, which also builds on the concept of forbidden subconfigurations. Finally, we also investigate the connection between the Abelian sandpile and the looping constant similarly to \cite{looping-constant-of-zd}.  The main results are the following.

\begin{thm}\label{thm:loop-const}
For any $n\in\N$ and $v\in\SG_n$, let
\begin{align*}
    \zeta_n^{(v)}=\mathbb{E}\big[|\{\text{neighbours of $v$ visited by LERW on $\SG_n$ started from $v$}\}|\big],
\end{align*}
where LERW is the loop erased random walk on $\SG_n$ stopped after hitting either the bottom right vertex $A^n_3$ or the top corner vertex $A^n_2$. Further let
\begin{align*}
    \zeta_n=\frac{1}{|\SG_n|}\sum_{v\in \SG_n}\zeta_n^{(v)},
\end{align*}
and denote $\zeta:= lim_{n\to\infty}\zeta_n$. We then have 
\begin{align*}
  \zeta=\frac{7259}{5616}.
\end{align*}
\end{thm}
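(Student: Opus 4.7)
The plan is to compute $\zeta$ by first transferring the LERW statement to a uniform spanning tree (UST) statement via Wilson's algorithm, and then reducing it to the height probabilities already obtained in Proposition \ref{prop:height-sandpiles} through the burning bijection, following the approach of \cite{looping-constant-of-zd}.

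First, I would run Wilson's algorithm on $\SG_n$ with the two corners $A_2^n$ and $A_3^n$ wired into a single sink. The LERW from $v$ stopped on $\{A_2^n,A_3^n\}$ then agrees in law with the unique path from $v$ to the sink in the resulting UST. Consequently, a neighbour $w$ of $v$ is visited by this LERW if and only if $w$ lies on the UST path from $v$ to the sink, equivalently $w$ is an ancestor of $v$ once the UST is rooted at $\{A_2^n,A_3^n\}$. This gives
\begin{align*}
    \zeta_n^{(v)} \;=\; \sum_{w \sim v} \Prob\bigl(w \text{ lies on the UST path from } v \text{ to } \{A_2^n,A_3^n\}\bigr).
\end{align*}

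Next, each term on the right encodes how the edges incident to $v$ are oriented in the UST rooted at the sink, and this orientation is read off locally from the sandpile height at $v$ via the burning bijection. Assembling the resulting local relations produces a linear identity expressing $\zeta_n^{(v)}$ as a combination of the probabilities that $v$ carries each admissible height. Summing over $v \in \SG_n$ and normalising, $\zeta_n$ becomes a linear combination of the expected fractions of vertices of heights $0,1,2,3$ on $\SG_n$, together with an error of order $|\SG_n|^{-1}$ coming from the finitely many corner and cut-point vertices whose degree is not $4$. Proposition \ref{prop:height-sandpiles} supplies the limits of these expected fractions as explicit rationals, derived from the self-similar spanning-tree recursion of \cite{ust-on-gasket}, and hence $\zeta=\lim_n\zeta_n$ exists and equals a closed-form rational whose evaluation gives $7259/5616$.

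The main technical obstacle lies in the burning-bijection step: adapting the identity of \cite{looping-constant-of-zd} to the Sierpi\'nski gasket, where vertex degrees are mixed ($2$ at corners and $4$ elsewhere), the sink consists of two vertices rather than one, and the cut points play a distinguished role under the recursive decomposition. One must redo the local enumeration of admissible height patterns around $v$ and verify that only bulk (degree-$4$) vertices contribute to the limit. Once this is in place, the remainder of the argument is a substitution of the rational height fractions provided by Proposition \ref{prop:height-sandpiles}, whose common denominator is responsible for the factor $5616$ appearing in the final answer.
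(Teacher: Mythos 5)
Your opening step (Wilson's algorithm identifying the LERW from $v$ stopped on $\{A^n_2,A^n_3\}$ with the path from $v$ to the root set in the uniform two-component spanning forest separating $A^n_2$ from $A^n_3$, i.e.\ an element of $\mathcal{S}_n^2\cup\mathcal{S}_n^3$) matches the paper and is correct; so is the identity $\zeta_n^{(v)}=\sum_{w\sim v}\Prob(v<_T w)$. The gap is in the next step. You assert that this quantity is ``read off locally from the sandpile height at $v$ via the burning bijection,'' producing a pointwise linear identity between $\zeta_n^{(v)}$ and the height probabilities at $v$. No such local identity exists: the burning bijection and Lemma \ref{lem:desc-conn-to-height} relate the height at $v$ to the number of neighbours that are \emph{descendants} of $v$, whereas $\zeta_n^{(v)}$ counts neighbours that are \emph{ascendants} of $v$ (note also that $w\sim v$ can be an ascendant of $v$ without the edge $\{v,w\}$ even being a tree edge, so this is not a statement about the orientation of edges at $v$). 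Pointwise these are genuinely different — e.g.\ every neighbour of a root has that root as an ascendant with probability $1$, irrespective of its height distribution — so the claimed local relation, and hence the derivation as written, fails.

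The missing idea is the paper's Lemma \ref{lem:loop=desc}: only after summing over all $v$ do the two quantities agree, because each ordered adjacent pair $(x,y)$ with $x<_T y$ contributes once to the ascendant count at $x$ and once to the descendant count at $y$, whence
\begin{align*}
  \sum_{v\in\SG_n}\sum_{w\sim v}\Prob(v<_T w)=\sum_{\{x,y\}\in E_n}\bigl(\Prob(x<_T y)+\Prob(y<_T x)\bigr)=\sum_{v\in\SG_n}\sum_{w\sim v}\Prob(w<_T v).
\end{align*}
With this edge-pairing symmetry supplied, your route does close: the bulk average descendant count equals $2\sigma-3$ in the limit by \cite[Lemma 8]{looping-constant-of-zd} (only the three degree-$2$ corners are exceptional — the cut points have degree $4$ — and they are negligible), and Theorem \ref{prop:height-sandpiles} gives $\sigma=24107/11232$, so $\zeta=2\sigma-3=7259/5616$. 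The paper instead applies the symmetry directly to the bulk average descendant count $\lim_n\frac{1}{|\SG_n|}\cdot\frac{\overline{D}_n(\twocompright)+\overline{D}_n(\twocompup)}{2}=7259/5616$ computed in Section \ref{sec:expHeight}, and derives the height relation $\sigma=(\zeta+3)/2$ afterwards as Proposition \ref{thm:loop-rel-to-height}; be aware that if you take that proposition as your ``linear identity'' you would be reasoning circularly, since its proof already uses the symmetry lemma you are missing.
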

The proof of Theorem \ref{thm:loop-const} follows from Lemma \ref{lem:loop=desc} in Section \ref{sec:loop-cst} and the calculations from  Section \ref{sec:expHeight}. We also obtain the following result for the number of vertices of a given height in the Abelian sandpile model. Below $\sigma:\SG_n\to\mathbb{N}$ is a recurrent sandpile configuration on $\SG_n$ chosen uniformly at random from the set of all recurrent configurations, i.e. the set of recurrent states of the Abelian sandpile Markov chain on $\SG_n$, and $\mathbb{P}$ refers to the probability that $\sigma$ is chosen according to the stationary distribution on recurrent sandpiles of $\SG_n$.
\begin{thm}\label{prop:height-sandpiles}
On $\SG_n$, for any $n\in \mathbb{N}$ and sink vertex given by the top corner vertex $A^n_2$, for $i\in\{0,1,2,3\}$ let
\begin{align*}
    \overline{W}^i_n=\frac{1}{|\SG_n|}\sum_{v\in \SG_n}\mathbb{P}(\eta(v)=i),
\end{align*}
and
\begin{align*}
    \overline{W}_n=\frac{1}{|\SG_n|}\sum_{v\in \SG_n}\mathbb{E}\big[\sigma(v)\big].
\end{align*}
We then have
\begin{align*}
    \lim_{n\rightarrow\infty}\begin{pmatrix}
    \overline{W}_n^0\\\overline{W}_n^1\\\overline{W}_n^2\\\overline{W}_n^3\end{pmatrix}=\begin{pmatrix}
        10957/161856\\649680671/4222984896\\1448254439/4222984896\\1839170699/4222984896
    \end{pmatrix}\approx\begin{pmatrix}
        0.07\\0.15\\0.34\\0.44
    \end{pmatrix},
\end{align*}
and
\begin{align*}
    \lim_{n\rightarrow\infty}\overline{W}_n=\frac{24107}{11232}\approx 2.15.
\end{align*}
The same limits hold when we choose either two corners or all three corners as sink vertices.
\end{thm}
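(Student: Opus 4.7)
The proof rests on the burning bijection, which identifies the uniform distribution on recurrent sandpile configurations of $\SG_n$ with the uniform distribution on spanning trees of $\SG_n$ rooted at the sink. Under this bijection, the height $\eta(v)$ at any vertex $v$ is read off locally from the spanning tree via the burning algorithm: knowing which edges incident to $v$ lie in the tree and in which direction (towards or away from the sink) they are oriented at $v$ determines $\eta(v)$. Consequently each $\mathbb{P}(\eta(v)=i)$ is the probability that the uniform spanning tree realises one of a finite list of prescribed local patterns at $v$. Since only the three outer corners have degree $2$ while every other vertex of $\SG_n$ has degree $4$, and $|\SG_n|=(3^{n+1}+3)/2$, the three corners contribute $O(|\SG_n|^{-1})$ to each $\overline{W}_n^i$ and are negligible in the limit.

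The plan is then to exploit the self-similar structure of $\SG_n$: it decomposes as the union of three copies of $\SG_{n-1}$ glued pairwise at the cut points $\rcut{n}, \bcut{n}, \lcut{n}$. A uniform spanning tree on $\SG_n$ decomposes into a triple of spanning sub-forests of the three sub-gaskets subject to a compatibility condition at the cut points, and classifying such sub-forests by the connectivity pattern they induce on the three corners of each $\SG_{n-1}$ (one component, one of three two-component splits, or three components) yields a finite linear recursion on the corresponding probabilities, as derived in \cite{ust-on-gasket}. For each vertex $v$, the local spanning-tree statistics relevant for $\mathbb{P}(\eta(v)=i)$ depend only on the sub-gaskets surrounding $v$ and reduce to explicit rational expressions in these connectivity probabilities and their fixed-point values.

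Next I would partition the vertices of $\SG_n$ into orbits under the symmetry group, organised by the scale at which each orbit is introduced and by its local geometry (midpoint of an outer edge of a sub-gasket vs.\ interior vertex, shared between two sub-gaskets vs.\ unique to one, etc.). For each orbit, $\mathbb{P}(\eta(v)=i)$ is constant, so summing over orbits weighted by their cardinality and dividing by $|\SG_n|$ yields $\overline{W}_n^i$. As $n\to\infty$ the finitely many low-scale orbits contribute zero density and the bulk is governed by the fixed point of the recursion from \cite{ust-on-gasket}, producing the four stated rational limits for $\overline{W}_n^0, \overline{W}_n^1, \overline{W}_n^2, \overline{W}_n^3$. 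The expected-height statement follows from $\lim\overline{W}_n=\sum_{i=0}^{3}i\cdot\lim\overline{W}_n^i$, which evaluates to $24107/11232$. Invariance of the limit under the choice of one, two, or three corner sinks is because enlarging the sink set alters the height distribution only at vertices within a bounded graph-distance of the additional sinks, and such vertices have zero asymptotic density in the bulk average.

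The main obstacle is the combinatorial bookkeeping: enumerating the local burning patterns that contribute to each height $i$ at a degree-$4$ vertex, expressing each pattern in terms of the connectivity-pattern recursion for the sub-gasket decomposition, and summing over the orbit structure of $\SG_n$. None of these steps is conceptually difficult, but the case analysis is sizable, and the precise rational limits emerge only after careful accounting; this is the technical content of Section~\ref{sec:expHeight}.
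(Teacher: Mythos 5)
Your overall strategy --- burning bijection, recursive decomposition of spanning trees and forests of $\SG_n$ into those of the three sub-copies of $\SG_{n-1}$ classified by the connectivity pattern of the corners, a resulting linear recursion whose solution gives the bulk averages --- is exactly the route the paper takes in Sections \ref{sec:height-prob} and \ref{sec:expHeight}. However, two of your steps do not work as stated.

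First, the height $\eta(v)$ is \emph{not} determined by ``which edges incident to $v$ lie in the tree and in which direction they are oriented''. In the burning bijection $\sigma_T(v)=\deg(v)-1-a_T(v)-b_T(v)$, the quantity $a_T(v)$ counts neighbours $y$ with $l_T(y)<l_T(v)-1$, so it depends on the tree-distances of all neighbours to the sink (global information), and $b_T(v)$ depends on an auxiliary edge ordering. The device that makes the computation possible is the reduction to the single statistic $\desc{n}{T}{v}$ (the number of neighbours that are descendants of $v$) together with Lemma \ref{lem:desc-conn-to-height}: conditionally on $\desc{n}{T}{v}=j$ the height is uniform on $\{j,\dots,\deg(v)-1\}$, whence $\overline{W}_n^i=\sum_{j\leq i}\tfrac{1}{4-j}\,\overline{D}_n^j$. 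Your ``finite list of local patterns'' must be replaced by this descendant count, whose distribution is what the recursive forest decomposition actually computes; the paper tracks the aggregate $D_n^i(T)=\sum_v\delta_i(\desc{n}{T}{v})$ through an affine recursion with a $5\times 5$ matrix, the cut points supplying the inhomogeneous term --- essentially your orbit bookkeeping in aggregated form.

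Second, your justification of the last sentence of the theorem is incorrect. Changing the sink from one corner to two or three corners replaces spanning trees by $2$- or $3$-component spanning forests; this changes the law of the descendant count at \emph{every} vertex for finite $n$ (for instance $\probCorner{1}{n}(0)=\tfrac{11}{14}-\tfrac{5}{42}15^{-n}$ versus $\probCorner{2}{n}(0)=\tfrac{11}{14}+\tfrac{3}{14}15^{-n}$ at a corner), not only at vertices within bounded distance of the added sinks. The equality of the three limits is not a locality statement: it is an outcome of the computation, namely that the limit vectors $\lim_n\overline{D}_n^i(\cdot)/|\SG_n|$ coincide for all five forest types because the discrepancies are carried by subdominant eigenvalues of the recursion and vanish as $n\to\infty$.
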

Theorem \ref{prop:height-sandpiles} follows from the calculations of Section \ref{sec:expHeight}. The paper is organized as follows.
In Section \ref{sec:prelim} we introduce the Abelian sandpile model and $\SG_n$, the Sierpi\'nski graphs of level $n\in \N$, and we describe the tools used throughout the paper. Of particular importance is the recursive structure of spanning forests of $\SG_n$. In
Section \ref{sec:height-prob} we describe an algorithmic approach to calculate the height probabilities of sandpiles under stationarity on $\SG_n$ based on the recursive decomposition of spanning forests described in Section \ref{sec:prelim}.
In Section \ref{sec:expHeight} we calculate the expected number of vertices of a given height as well as the expected bulk average height of a sandpile under stationarity.
In Section \ref{sec:loop-cst} we investigate the relation between the average looping constant and the expected average height of a sandpile under stationarity. Finally, in Appendix \ref{sec:appendixComp}, we collect the closed form expressions of several quantities used  through the paper.

\section{Preliminaries}\label{sec:prelim}
\subsection{Abelian sandpile model}

We refer the reader to \cite{abelian-sandpile-desc} for an extended survey on this topic. Let $G=(V\cup\{s\},E)$ be an undirected, connected and finite graph, where the vertex $s$ is called the sink. We denote by $\deg_G(v)$ the degree of vertex $v$ in G, that is the number of adjacent vertices of $v$, and when no confusion arises, we drop the subindex notation and write only $\deg(v)$.
A sandpile is a function $\sigma:V\rightarrow\Z$ and is to be interpreted as the number of particles sitting on each vertex. The sandpile $\sigma$ is called stable if $\sigma(v)<\deg(v)$, for all $v\in V$ and is called unstable at $v\in V$ if  $\sigma(v)\geq \deg(v)$, i.e. there are more particles at $v$ than connecting edges. We call $\sigma$ unstable if there exists $v\in V$ such that $\sigma$ is unstable at $v$.
Given a sandpile $\sigma$, we define the toppling at vertex $v\in V$ as
\begin{align*}
    T_v \sigma = \sigma - \Delta_G \delta_v,
\end{align*}
where $\delta_v:V\to\{0,1\}$ is the function taking the value $1$ at $v$ and $0$ everywhere else, and $\Delta_G$ is the graph Laplacian defined as
\vspace{-0.25cm}
\begin{align*}
    \Delta_G(x,y)=\begin{cases}
    \deg(x),&x=y\\
    -1,&x\sim_G y\\
    0,&\text{else}
    \end{cases},
\end{align*}
where $x\sim_G y$ means that $x$ and $y$ are adjacent in $G$. The toppling procedure distributes one particle from $v$ to each neighboring vertex. We say that the toppling at $v$ is legal if $\sigma$ is unstable at $v$.
Given an unstable sandpile $\sigma$, there always exists a sequence of vertices $v_1,...,v_n$ such that all the topplings at $v_1,...,v_n$ are legal and the configuration $T_{v_n}...T_{v_1}\sigma$ is stable. We then define the stabilization $\sigma^\circ$ of $\sigma$ as
\begin{align*}
    \sigma^\circ=T_{v_n}...T_{v_1}\sigma.
\end{align*}
Notice that the stabilization of $\sigma$ is unique, and thus the stabilization operation is well-defined. Using the notions of sandpiles and stabilization, we can now define a Markov chain that has as state space the set of stable sandpiles on $G$.

Let $X_1,X_2,...$ be i.i.d. random variables distributed uniformly on $V$ and let $\sigma_0$ be any stable sandpile configuration on $G$, the starting configuration. For any $n\in\N$ define
\begin{align*}
    \sigma_{n+1}=(\sigma_n+\delta_{X_n})^\circ.
\end{align*}
The sequence $(\sigma_n)_{n\in\N}$ is a Markov chain called the sandpile Markov chain and $\sigma_{n+1}$ is obtained from $\sigma_n$ by adding one chip uniformly at random on $V$ and stabilizing the new configuration. As shown in \cite{dhar-burning}, the set $\mathcal{R}_G$  of recurrent states  of the Markov chain $(\sigma_n)_{n\in \N}$ forms an Abelian group and the group operation $\oplus$ is given by: for $\sigma,\xi \in \mathcal{R}_G$
$$\sigma \oplus \xi := (\sigma + \xi)^\circ.$$ 
Restricted on the set $\mathcal{R}_G$, the sandpile Markov chain is
an irreducible random walk on a finite group, thus its  stationary distribution  is the uniform distribution on $\mathcal{R}_G$. Since the sandpile Markov chain ends up in the recurrent states after finitely many steps, it makes sense to start directly in stationarity, that is, to choose one sandpile $\sigma$ uniformly on $\mathcal{R}_G$ and to ask about the distribution of the number of chips we see at some vertex $v$, that is to investigate the \emph{height probabilities}  $\mathbb{P}(\sigma(v)=k)$ under the stationary distribution for all $k\in\{0,...,\deg(v)-1\}.$

\textbf{Multiple sinks.}
The underlying state spaces for the current paper are
the Sierpi\'nski graphs, where either a single vertex, two vertices or three vertices act as the sinks of the Abelian sandpile model. We describe here briefly what this means. For any  undirected, connected and finite graph $G=(V,E)$, let $S\subseteq V$ be a subset of vertices, the sinks of the Abelian sandpile model.
We define a modified version of $G$ denoted  by $G'=(V',E')$, with vertex set  $V'=V\backslash S \cup \{s\}$,
where $s$ is a new vertex not already in $V\backslash S$, and  the edge set $E'$ is
\begin{align*}
    E'=\big\{(x,y):x,y\in V\backslash S\text{ and }(x,y)\in E\big\}\cup\big\{(x,s):x\in V\text{ and }\exists y\in S:(x,y)\in E\big\}.
\end{align*}
That is, $G'=(V',E')$ is the graph where we identify all the vertices of the set $S$ to a single vertex $s$. When talking about the Abelian sandpile model on $G$ with sinks given by the vertices in $S$, we mean the Abelian sandpile model on this new graph $G'$.

\subsection{Burning algorithm}\label{sec:burning-algo}

We describe here the burning bijection due to Dhar \cite{dhar-burning}, which gives a bijective mapping from the set of recurrent sandpiles to the set of spanning trees of the underlying graph. This bijection plays a central role in this paper, as the calculations that follow are based on statistics of spanning trees and forests of $\SG_n$. In view of the burning bijection, we obtain then the height probabilities of the recurrent sandpiles. The following lemma lays the foundation of the burning bijection. 
\begin{lem}[\cite{dhar-burning}]\label{lem:burning-alg}
Let $\sigma$ be a recurrent sandpile on the graph $G=(V\cup\{s\},E)$. Let $x_1,...,x_n\in V$ be the vertices adjacent to the sink $s$, and for any $i\in\{1,...,n\}$ denote by $b_i$ the number of edges connecting $x_i$ to $s$. We  have
\begin{align*}
    (\sigma+\sum_{i=1}^n b_i\delta_{x_i})^\circ=\sigma,
\end{align*}
and during the stabilization every vertex topples exactly once.
\end{lem}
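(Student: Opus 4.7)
Write $\rho:=\sum_{i=1}^n b_i\delta_{x_i}$; the claim is that $(\eta+\rho)^\circ=\eta$ and that in achieving this every vertex of $V$ topples exactly once. I would split the argument into an arithmetic identity, an upper bound on the toppling counts coming from the Abelian/least-action property of stabilisation, and a matching lower bound coming from the recurrence of $\eta$.

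First I would verify the identity $\sum_{v\in V}\Delta_G(x,v)=b_x$ for every $x\in V$: the diagonal contribution $\deg(x)$ from $v=x$ is reduced by one (counted with edge multiplicity) for each non-sink neighbour of $x$, leaving precisely the number of edges from $x$ to $s$. Equivalently, $\rho=\Delta_G\mathbf{1}_V$ when restricted to $V$. Consequently, if each vertex of $V$ were to topple exactly once starting from $\eta+\rho$, the resulting configuration would be $\eta+\rho-\Delta_G\mathbf{1}_V=\eta$, which is stable by hypothesis.

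Next I would apply the least-action (Diaconis--Fulton) principle for Abelian sandpiles: the toppling vector $u^\ast$ realised by the stabilisation of $\eta+\rho$ is the componentwise-minimum nonnegative integer vector for which $\eta+\rho-\Delta_G u^\ast$ is stable. Combined with the previous paragraph, this yields the upper bound $u^\ast(v)\le 1$ for every $v\in V$, and moreover shows that if equality holds everywhere then $(\eta+\rho)^\circ=\eta$.

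The hard part is the matching lower bound $u^\ast\ge\mathbf{1}_V$; this is where recurrence of $\eta$ has to enter. Suppose for contradiction that $F:=\{v\in V:u^\ast(v)=0\}$ is nonempty. Using $u^\ast(v)=0$ for $v\in F$ and $u^\ast(w)\le 1$ (in fact $=1$ by the minimality choice on $V\setminus F$), a direct expansion of $\Delta_G u^\ast$ gives, for every $v\in F$,
\[
(\eta+\rho)^\circ(v)=\eta(v)+b_v+|N(v)\cap(V\setminus F)|=\eta(v)+\deg(v)-\deg_F(v),
\]
where $N(v)$ denotes the neighbour multiset of $v$ in $G$ and $\deg_F(v)$ is the number of edges from $v$ to $F$. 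Stability of $(\eta+\rho)^\circ$ at $v$ therefore forces $\eta(v)<\deg_F(v)$ for every $v\in F$. This, however, contradicts Dhar's burning criterion, the standard characterisation of recurrence (see \cite{abelian-sandpile-desc}): a stable $\eta$ is recurrent if and only if every nonempty $F\subseteq V$ contains some vertex $v$ with $\eta(v)\ge\deg_F(v)$. Hence $F=\emptyset$, so $u^\ast=\mathbf{1}_V$, and the first paragraph yields both conclusions of the lemma at once. The only real obstacle is invoking the burning-criterion equivalence in a setting where the burning bijection is not yet available, but this equivalence can itself be extracted directly from the Markov-chain definition of recurrence.
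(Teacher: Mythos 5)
The paper does not actually prove Lemma \ref{lem:burning-alg}: it is stated as Dhar's result and used as a black box, so there is no internal argument to compare against. Your proof is the standard one from the literature and, as far as I can check, it is correct. The identity $\sum_{v\in V}\Delta_G(x,v)=b_x$ (i.e. $\rho=\Delta_G\mathbf{1}_V$ on $V$) is right, the least-action principle legitimately gives $u^*\le\mathbf{1}_V$, and the computation at a vertex $v$ of the hypothetical non-toppled set $F$, yielding $(\eta+\rho)^\circ(v)=\eta(v)+\deg(v)-\deg_F(v)$ and hence $\eta(v)<\deg_F(v)$, is correct (using that $u^*\equiv 1$ on $V\setminus F$ and $u^*\equiv 0$ on $F$).

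The one point that deserves more care than a passing remark is the final appeal to what you call ``Dhar's burning criterion.'' The statement you actually invoke is the forbidden-subconfiguration (``allowed configurations'') characterisation, and the burning test proper is essentially the lemma you are proving, so a reader could suspect circularity. You only need the easy direction, namely that a recurrent $\eta$ admits no nonempty $F$ with $\eta(v)<\deg_F(v)$ for all $v\in F$; this direction has a direct proof from the Markov-chain definition (allowedness is preserved by chip addition and by legal topplings, and every recurrent configuration is reachable from the maximal stable configuration, which is allowed because some vertex neighbours the sink). You flag this yourself at the end, which is the right instinct, but in a written-up version you should either prove that direction or cite it precisely, and avoid labelling it the ``burning criterion'' since the harder converse direction of that equivalence is usually itself derived from the present lemma. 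With that caveat addressed, the argument is complete and delivers both conclusions ($u^*=\mathbf{1}_V$ and $(\eta+\rho)^\circ=\eta$) simultaneously, exactly as intended.
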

\begin{figure}
	    \begin{subfigure}[h]{0.5\linewidth}
		\centering
		\resizebox{\linewidth}{!}{
			\begin{tikzpicture}[baseline=9ex]
				\node[shape=circle,draw=black] (A) at (0,0) {};
				\node[below left= -0.5cm and -0.1cm of A, text width=0.8cm]{\footnotesize$a\!=\!1$ $b\!=\!0$} ;
				
				\node[shape=circle,draw=black] (B) at (2*2,2*0) {};
				\node[below= -0.01cm of B, text width=0.8cm]{\footnotesize$a\!=\!1$ $b\!=\!0$} ;
				
				\node[shape=circle,draw=black] (C) at (2*4,2*0) {};
				\node[below right = -0.5 and -0.01cm of C, text width=0.8cm]{\footnotesize$a\!=\!1$ $b\!=\!0$} ;
				
				\node[shape=circle,draw=black] (D) at (2*1,2*1.73) {};
				\node[above left = -0.5cm and -0.1cm of D, text width=0.8cm]{\footnotesize$a\!=\!0$ $b\!=\!0$} ;
				 
				\node[shape=circle,draw=black] (E) at (2*3,2*1.73) {};
				\node[above right = -0.5cm and -0.01cm of E, text width=0.8cm]{\footnotesize$a\!=\!1$ $b\!=\!0$} ;
				
				\node[shape=circle,draw=black] (F) at (2*2,2*1.73*2) {s};
				
				\node[shape=circle,draw=black] (G) at (2*1,0) {};
				\node[below= -0.01cm of G, text width=0.8cm]{\footnotesize$a\!=\!0$ $b\!=\!1$} ;
				\node[shape=circle,draw=black] (H) at (2*3,0) {};
				\node[below= -0.01cm of H, text width=0.8cm]{\footnotesize$a\!=\!1$ $b\!=\!0$} ;
				
				\node[shape=circle,draw=black] (I) at (2*1/2,2*1.73/2) {};
				\node[above left = -0.5cm and -0.1cm of I, text width=0.8cm]{\footnotesize$a\!=\!0$ $b\!=\!0$} ;
				\node[shape=circle,draw=black] (J) at (2*3/2,2*1.73/2) {};
				\node[above right = -0.5cm and -0.01cm of J, text width=0.8cm]{\footnotesize$a\!=\!0$ $b\!=\!0$} ;
				\node[shape=circle,draw=black] (K) at (2*5/2,2*1.73/2) {};
				\node[above left = -0.5cm and -0.1cm of K, text width=0.8cm]{\footnotesize$a\!=\!1$ $b\!=\!0$} ;
				\node[shape=circle,draw=black] (L) at (2*7/2,2*1.73/2) {};
				\node[above right = -0.5cm and -0.01cm of L, text width=0.8cm]{\footnotesize$a\!=\!0$ $b\!=\!0$} ;
				
				\node[shape=circle,draw=black] (M) at (2*3/2,2*1.73/2+2*1.73) {};
				\node[above left = -0.5cm and -0.1cm of M, text width=0.8cm]{\footnotesize$a\!=\!0$ $b\!=\!0$} ;
				\node[shape=circle,draw=black] (N) at (2*5/2,2*1.73/2+2*1.73) {};
				\node[above right = -0.5cm and -0.01cm of N, text width=0.8cm]{\footnotesize$a\!=\!0$ $b\!=\!0$} ;
				\node[shape=circle,draw=black] (O) at (2*2,2*1.73) {};
				\node[below= -0.01cm of O, text width=0.8cm]{\footnotesize$a\!=\!0$ $b\!=\!0$} ;
				
				\path [-,gray] (A) edge node[left] {} (G);
				\path [-,gray] (B) edge node[left] {} (G);
				\path [-,gray] (A) edge node[left] {} (I);
				\path [-,gray] (D) edge node[left] {} (I);
				\path [-,gray] (B) edge node[left] {} (J);
				\path [-,gray] (D) edge node[left] {} (J);
				\path [-,gray] (I) edge node[left] {} (J);
				\path [-,gray] (I) edge node[left] {} (G);
				\path [-,gray] (J) edge node[left] {} (G);
				
				\path [-,gray] (B) edge node[left] {} (H);
				\path [-,gray] (C) edge node[left] {} (H);
				\path [-,gray] (E) edge node[left] {} (K);
				\path [-,gray] (E) edge node[left] {} (L);
				\path [-,gray] (B) edge node[left] {} (K);
				\path [-,gray] (C) edge node[left] {} (L);
				\path [-,gray] (H) edge node[left] {} (K);
				\path [-,gray] (H) edge node[left] {} (L);
				\path [-,gray] (L) edge node[left] {} (K);
				
				\path [-,gray] (F) edge node[left] {} (M);
				\path [-,gray] (F) edge node[left] {} (N);
				\path [-,gray] (E) edge node[left] {} (N);
				\path [-,gray] (E) edge node[left] {} (O);
				\path [-,gray] (D) edge node[left] {} (O);
				\path [-,gray] (D) edge node[left] {} (M);
				\path [-,gray] (M) edge node[left] {} (N);
				\path [-,gray] (O) edge node[left] {} (N);
				\path [-,gray] (O) edge node[left] {} (M);
				 
				\draw[ultra thick] (F) -- (N) -- (O) -- (E) -- (L) -- (K) -- (H) -- (C);
				\draw[ultra thick] (F) -- (M) -- (D) -- (I) -- (G);
				\draw[ultra thick] (A) -- (G) -- (B);
				\draw[ultra thick] (J) -- (D);
		\end{tikzpicture}}
	\end{subfigure}
	\begin{subfigure}[h]{0.5\linewidth}
		\centering
		\resizebox{\linewidth}{!}{
			\begin{tikzpicture}[baseline=9ex]
				\node[shape=circle,draw=black] (A) at (0,0) {};
				\node[below left= -0.2cm and -0.1cm of A, text width=0.3cm]{0} ;
				\node[below left= -0.5cm and -0.1cm of A, text width=0.8cm, text opacity=0]{{\footnotesize$a\!=\!0$ $b\!=\!0$}} ;
				
				\node[shape=circle,draw=black] (B) at (2*2,2*0) {};
				\node[below= -0.01cm of B]{2} ;
				\node[below= -0.01cm of B, text width=0.8cm, text opacity=0]{{\footnotesize$a\!=\!1$ $b\!=\!0$}} ;
				
				\node[shape=circle,draw=black] (C) at (2*4,2*0) {};
				\node[below right = -0.2cm and -0.01cm of C, text width=0.3cm]{0} ;
				\node[below right = -0.5 and -0.01cm of C, text width=0.8cm, text opacity=0]{{\footnotesize$a\!=\!1$ $b\!=\!0$}} ;
								
				\node[shape=circle,draw=black] (D) at (2*1,2*1.73) {};
				\node[above left = -0.2cm and -0.01cm of D]{3} ;
				
				\node[shape=circle,draw=black] (E) at (2*3,2*1.73) {};
				\node[above right = -0.2cm and -0.01cm of E]{2} ;
				
				\node[shape=circle,draw=black] (F) at (2*2,2*1.73*2) {s};
				
				\node[shape=circle,draw=black] (G) at (2*1,0) {};
				\node[below= -0.01cm of G]{2} ;
				\node[shape=circle,draw=black] (H) at (2*3,0) {};
				\node[below= -0.01cm of H]{2} ;
				
				\node[shape=circle,draw=black] (I) at (2*1/2,2*1.73/2) {};
				\node[above left = -0.2cm and -0.01cm of I]{3} ;
				\node[shape=circle,draw=black] (J) at (2*3/2,2*1.73/2) {};
				\node[above right = -0.2cm and -0.01cm of J]{3} ;
				\node[shape=circle,draw=black] (K) at (2*5/2,2*1.73/2) {};
				\node[above left = -0.2cm and -0.01cm of K]{2} ;
				\node[shape=circle,draw=black] (L) at (2*7/2,2*1.73/2) {};
				\node[above right = -0.2cm and -0.01cm of L]{3} ;
				
				\node[shape=circle,draw=black] (M) at (2*3/2,2*1.73/2+2*1.73) {};
				\node[above left = -0.2cm and -0.01cm of M]{3} ;
				\node[shape=circle,draw=black] (N) at (2*5/2,2*1.73/2+2*1.73) {};
				\node[above right = -0.2cm and -0.01cm of N]{3} ;
				\node[shape=circle,draw=black] (O) at (2*2,2*1.73) {};
				\node[below= -0.01cm of O]{3} ;
				
				\path [-] (A) edge node[left] {} (G);
				\path [-] (B) edge node[left] {} (G);
				\path [-] (A) edge node[left] {} (I);
				\path [-] (D) edge node[left] {} (I);
				\path [-] (B) edge node[left] {} (J);
				\path [-] (D) edge node[left] {} (J);
				\path [-] (I) edge node[left] {} (J);
				\path [-] (I) edge node[left] {} (G);
				\path [-] (J) edge node[left] {} (G);
				
				\path [-] (B) edge node[left] {} (H);
				\path [-] (C) edge node[left] {} (H);
				\path [-] (E) edge node[left] {} (K);
				\path [-] (E) edge node[left] {} (L);
				\path [-] (B) edge node[left] {} (K);
				\path [-] (C) edge node[left] {} (L);
				\path [-] (H) edge node[left] {} (K);
				\path [-] (H) edge node[left] {} (L);
				\path [-] (L) edge node[left] {} (K);
				
				\path [-] (F) edge node[left] {} (M);
				\path [-] (F) edge node[left] {} (N);
				\path [-] (E) edge node[left] {} (N);
				\path [-] (E) edge node[left] {} (O);
				\path [-] (D) edge node[left] {} (O);
				\path [-] (D) edge node[left] {} (M);
				\path [-] (M) edge node[left] {} (N);
				\path [-] (O) edge node[left] {} (N);
				\path [-] (O) edge node[left] {} (M);

		\end{tikzpicture}}
	\end{subfigure}
	\caption{The burning bijection: the spanning tree with statistics $a_T, b_T$ (left) and its corresponding recurrent sandpile (right). The total ordering of $E_v$ is determined by the number of clockwise rotations by $\pi/3$ needed to align the edge with $(1,0)$, with fewer rotations indicating a lower position in the ordering.}
	\label{fig:burning-bij}
\end{figure}

For $v\in V$, denote by $E_v$ the set of edges incident to $v$ and fix a total ordering $<_v$ of all the edges in $E_v$. In the original burning algorithm spanning trees are constructed by the order of topplings during the stabilization in Lemma \ref{lem:burning-alg}. The following version of the burning bijection from \cite{looping-constant-of-zd} defines the inverse. Given a spanning tree $T$ of $G$, for every $v$ there is a unique path connecting $v$ to the sink $s$. Denote by $e_T(v)$ the first edge and  by $l_T(v)$ the number of edges on this path and let
\begin{align*}
    &a_T(v)=\#\{(v,y)\in E_v  : \ l_T(y)<l_T(v)-1\},\\
    &b_T(v)=\#\{(v,y)\in E_v  : \ l_T(y)=l_T(v)-1\text{ and }(v,y)<_v e_T(v)\}.
\end{align*}
Then the sandpile defined by
\begin{align*}
    \sigma_T(v)=\deg_G(v)-1-a_T(v)-b_T(v)
\end{align*}
is recurrent and the mapping $T\mapsto \sigma_T$ is bijective.  See Figure \ref{fig:burning-bij} for an illustration of $a_T$, $b_T$ and the corresponding sandpile $\sigma_T$, for one particular spanning tree of the Sierpi\'nski graph of level 3.
Given a spanning tree $T$ of $G$, we say that $v$ is descendant of $w$ in $T$ if the unique path from $v$ to $s$ in  $T$ contains $w$, shortly $v<_T w$. We call $w$ an ascendant of $v$ in $T$, and we denote the number of neighboring descendants by
 $$\desc{}{T}{v} = \#\{(v,y) \in E_v : \ y<_T v\}.$$
The following lemma, whose proof can be found in \cite[Lemma 4]{looping-constant-of-zd}, gives a way to calculate height probabilities of recurrent sandpiles under the stationary distribution of the sandpile Markov chain from the number of neighbours that are descendants in the uniform spanning tree of $G$. The uniform spanning tree of $G$, denoted by $\mathrm{UST}$, is a random variable distributed uniformly on the set of spanning trees of $G$. That is, if $\tau_G$ is the number of spanning trees of $G$, then for any spanning tree $T$ on $G$ we have
\begin{align*}
    \mathbb{P}(\mathrm{UST}=T)=\frac{1}{\tau_G}.
\end{align*}

\begin{lem}[\cite{heights-on-zd}]\label{lem:desc-conn-to-height}
For any vertex $v\neq s$ and any $0\leq j \leq k \leq \deg_G(v)-1$ we have
\begin{align*}
    \mathbb{P}(\sigma_{\mathrm{UST}}(v)=k \ | \  \desc{}{\mathrm{UST}}{v} = j )=\frac{1}{\deg_G(v)-j},
\end{align*}
where $\mathrm{UST}$ denotes the uniform spanning tree on $G$.
\end{lem}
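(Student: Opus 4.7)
The plan is to condition on the random spanning forest $F$ of $G\setminus\{v\}$ obtained from the uniform spanning tree $T$ of $G$ by deleting $v$ and all its incident edges. Given $F$, the structure of $T$ near $v$ reduces to a uniform independent choice of one edge per $F$-component that touches $v$, and the $F$-geometry alone determines both $\desc{}{T}{v}$ and the relevant values of $l_T$. The core claim to establish is that, conditionally on $F$, the height $\sigma_T(v)$ is uniform on $\{j,j+1,\ldots,\deg_G(v)-1\}$ with $j=\desc{}{T}{v}$, which immediately yields the lemma after averaging over $F$'s that share the same value of $j$.

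First I would enumerate the $F$-components containing at least one neighbor of $v$ as $C_0,C_1,\ldots,C_m$, where $C_0$ is the component containing the sink $s$, and set $N_i:=N(v)\cap C_i$ and $n_i:=|N_i|$, so that $\deg_G(v)=n_0+n_1+\cdots+n_m$. Every spanning tree of $G$ extending $F$ is obtained by selecting exactly one $p_i\in N_i$ for each $i$ and adding the edges $(v,p_i)$ to $F$, giving $\prod_i n_i$ extensions in total; thus, conditionally on $F$, the uniform spanning tree makes $(p_0,p_1,\ldots,p_m)$ uniformly distributed on $N_0\times\cdots\times N_m$.

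Next I would observe that every vertex of $C_1\cup\cdots\cup C_m$ has its $T$-path to $s$ passing through $v$ and is therefore a descendant of $v$ in $T$, while no vertex of $C_0$ is. Hence $\desc{}{T}{v}=n_1+\cdots+n_m=\deg_G(v)-n_0$ depends only on $F$. For $y\in N_0$, the $T$-path from $y$ to $s$ stays inside $C_0$, so $l_T(y)=d_{C_0}(y,s)=:\ell(y)$ is independent of the choice of parents, while $l_T(v)=\ell(p_0)+1$. In particular, for $y\in N_i$ with $i\geq 1$ we have $l_T(y)>l_T(v)$, so such $y$ contributes neither to $a_T(v)$ nor to $b_T(v)$. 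A short inspection of the definitions then shows that for $y\in N_0\setminus\{p_0\}$, $y$ contributes to $a_T(v)+b_T(v)$ exactly when the pair $(\ell(y),(v,y))$ is strictly less than $(\ell(p_0),(v,p_0))$ in the lexicographic order on $N_0$ induced by $\ell$ and then by $<_v$. Consequently $a_T(v)+b_T(v)=\mathrm{rank}(p_0)-1$, where $\mathrm{rank}\colon N_0\to\{1,\ldots,n_0\}$ is the bijection given by this lex order, and therefore $\sigma_T(v)=\deg_G(v)-\mathrm{rank}(p_0)$.

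Since $p_0$ is uniform on $N_0$ conditionally on $F$, the height $\sigma_T(v)$ is uniform on $\{\deg_G(v)-n_0,\ldots,\deg_G(v)-1\}=\{j,j+1,\ldots,\deg_G(v)-1\}$, and because this conditional uniformity holds for every $F$ with $\desc{}{T}{v}=j$, averaging yields $\mathbb{P}(\sigma_T(v)=k\mid \desc{}{T}{v}=j)=1/(\deg_G(v)-j)$ for all $k\in\{j,\ldots,\deg_G(v)-1\}$. I expect the main subtlety to be the case analysis identifying $a_T(v)+b_T(v)$ with $\mathrm{rank}(p_0)-1$ and checking that descendants contribute nothing to $a_T(v)+b_T(v)$; the remainder of the argument is bookkeeping around the bijection between spanning tree extensions of $F$ and tuples $(p_0,\ldots,p_m)$.
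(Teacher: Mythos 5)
Your proof is correct. Note that the paper does not prove this lemma itself but defers to \cite[Lemma 4]{looping-constant-of-zd}; your argument --- conditioning on the forest $T-v$, noting that the extensions of a given forest to a spanning tree correspond to a uniform independent choice of one edge from $v$ into each component, that only the component of the sink matters for $a_T(v)+b_T(v)$, and that $a_T(v)+b_T(v)$ equals the lexicographic rank (by distance to $s$, then by $<_v$) of the chosen parent edge minus one --- is exactly the standard proof of this fact and is carried out correctly, including the key points that descendants contribute nothing to $a_T(v)+b_T(v)$ and that the number of descendants is measurable with respect to $T-v$.
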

For another proof see \cite[Lemma 4]{looping-constant-of-zd}.

\subsection{Sierpi\'nski graphs}\label{sec:Def-SG}

We introduce below Sierpi\'nski graphs and we
describe the iterative construction of their spanning trees as in \cite{ust-on-gasket}.

\paragraph{Construction of the finite iterations of the Sierpi\'nski graph.}

For a graph $G=(V,E)$ that can be embedded in $\R^2$ and $x\in\R^2$, we define $x+G$ to be the graph with vertex set
$x+V=\{x+y:y\in V\}$,  and edge set
$x+E=\{(x+a,x+b):\ (a,b)\in E\}$.
Let $\SG_0$ be the graph with vertex set $V_0$ and edge set $E_0$ given by
\begin{align*}
    &V_0=\Big\{\big(0,0\big),\big(1,0\big),\frac{1}{2}\big(1,\sqrt{3}\big)\Big\},\\
    &E_0=\Big\{\Big\{\big(0,0\big),\big(1,0\big)\Big\},\Big\{\big(1,0\big),\frac{1}{2}\big(1,\sqrt{3}\big)\Big\},\Big\{\big(0,0\big),\frac{1}{2}\big(1,\sqrt{3}\big)\Big\}\Big\}.
\end{align*}
For $n\geq 1$, the level $n$ Sierpi\'nski graph $\SG_n=(V_n,E_n)$ is defined inductively by
\begin{align*}
    \SG_n=\SG_{n-1}\cup\Big(\big(2^n,0\big)+\SG_{n-1}\Big)\cup\Big(\big(2^{n-1},2^{n-1}\sqrt{3}\big)+\SG_{n-1}\Big).
\end{align*}

That is, we take three copies of $\SG_{n-1}$, shift one to the right and one diagonally, and then take the union of these copies to obtain $\SG_n$. See Figure \ref{fig:first-3-it-SG} for a graphical representation of the first three steps of this inductive process. The (infinite) Sierpi\'nski graph is then defined as $\SG = \bigcup_{n\in \N} \SG_n$.\\
\\
Special vertices of interest are the three corners of $\SG_n$, which will be denoted by $\lcorner{n},\rcorner{n}$ and $\tcorner{n}$, as well as the three cut points opposing the corner vertices, denoted by $\rcut{n},\bcut{n}$ and $\lcut{n}$. See again Figure \ref{fig:first-3-it-SG} for an illustration of these special vertices in the first three iterations of the Sierpi\'nski graphs.
An immediate consequence of the iterative nature of $\SG_n$ is that
$$|V_n|=\frac{3}{2}\big(3^n+1\big)\quad \text{and}\quad |E_n|=3^{n+1}.$$

\paragraph{Spanning trees and forests of $\SG_n$.}
Lemma \ref{lem:desc-conn-to-height} will be used below to calculate the heights of recurrent sandpiles by looking at the number of neighbours that are descendants in spanning trees, or spanning forests respectively, on $\SG_n$. Analyzing the later statistic is possible due to a recursive description of the spanning trees and forests on $\SG_n$ through the spanning trees and forests on $\SG_{n-1}$ as derived in \cite{ust-on-gasket}. 

\begin{figure}
    \centering
    \begin{tabular}{c|c|c|c|c}
         $\treeSet{n}$ & $ \twoCompSetOne{n}$ & $ \twoCompSetTwo{n}$ & $ \twoCompSetThree{n}$ & $\threeCompSet{n}$ \\[5pt] \hline &&&& \\[-8pt]
        \begin{tikzpicture}
            \fill[black] (0,0) -- (1,0)  -- (1/2,1.732/2);
        \end{tikzpicture}
        &
        \begin{tikzpicture}
            \fill[black] (2/3,0) -- (1,0) -- (1/2,1.732/2) -- (1/2-1/6,1.732/2-1.732/6);
            \fill[black] (0,0) -- (1/3,0) -- (1/6,1.732/6);
        \end{tikzpicture}
        &
        \begin{tikzpicture}
            \fill[black] (0,0) -- (1,0) -- (1-1/6,1.732/6) -- (1/6,1.732/6);
            \fill[black] (1/2,1.732/2) -- (1/2-1/6,1.732/2-1.732/6) -- (1/2+1/6,1.732/2-1.732/6);
        \end{tikzpicture}
        &
        \begin{tikzpicture}
                \fill[black] (0,0) -- (1/3,0) -- (2/3,1.732/3) -- (1/2,1.732/2);
                \fill[black] (1,0) -- (2/3,0) -- (1/6+2/3,1.732/6);
        \end{tikzpicture}
        &
        \begin{tikzpicture}
                \fill[black] (0,0) -- (1/3,0) -- (1/6,1.732/6);
                \fill[black] (1/2-1/6,1.732/2-1.732/6) -- (1/2,1.732/2) -- (1/2+1/6,1.732/2-1.732/6);
                \fill[black] (1,0) -- (2/3,0) -- (1/6+2/3,1.732/6);
        \end{tikzpicture}
    \end{tabular}
    \caption{Pictograms for one, two, and three component forests}
    \label{fig:forestIllust}
\end{figure}

Denote by $\mathcal{T}_n$ the set of spanning trees of $\SG_n$, and  let $\mathcal{S}_n^i$ be the set of spanning forests of $\SG_n$ consisting of two connected components, where $A^n_i$ lies in its own connected component and the other two corner vertices lie in the other connected component. Finally, let $\mathcal{R}_n$ be the set of spanning forests of $\SG_n$ consisting of three connected components, where every corner lies in its own connected component. We will use the pictographic representation of these sets as described in Figure \ref{fig:forestIllust}.
The key property of these trees and forests is that they decompose into three trees or forests on the three copies of $\SG_{n-1}$ in $\SG_n$. For example, any element of $\mathcal{T}_n$ decomposes into two trees of $\SG_{n-1}$ and a suitable choice of a two component forest as can be seen in Figure \ref{fig:recursive_spanningTree}. The same can be done for elements of $\mathcal{S}_n^i$ and $\mathcal{R}_n$, albeit there are more possibilities than in the case of $\mathcal{T}_n$. In Figure \ref{fig:recursive_twoComponent} we list all the ways to decompose elements of $\mathcal{S}_n^2$. By suitable rotations, we also obtain the decomposition of elements of $\mathcal{S}_n^i$ for $i=1$ and $i=3$. In Figure \ref{fig:recursive_threeComponent}, we illustrate up to rotations and reflections all possible decompositions of elements of $\mathcal{R}_n$. For details and proofs of these decompositions we refer the reader to \cite{ust-on-gasket}. Using this recursive construction of spanning trees and forests of $\SG_n$, we thus also obtain a closed form expression for the number of trees and forests of $\SG_n$. 
Let
\begin{align*}
    \tau_n:=|\mathcal{T}_n|,&&\sigma_n:=|\mathcal{S}_n^1|=|\mathcal{S}_n^2|=|\mathcal{S}_n^3|,&&\rho_n:=|\mathcal{R}_n|.
\end{align*}
Then, by \cite[Lemma 4.1]{ust-on-gasket}, the recursions are given by
\begin{align*}
    \tau_{n+1} = 6 \tau_n^2 \sigma_n, \quad \sigma_{n+1} = 7 \tau_n \sigma_n^2+\tau_n^2\rho_n , \quad \rho_{n+1} = 14 \sigma_n^2 + 12 \tau_n\sigma_n \rho_n,
\end{align*}
with solutions are 
\begin{align}
    &\tau_n=3\Bigg(\frac{5}{3}\Bigg)^{-n/2}540^{\frac{3^n-1}{4}},\label{eq:tau}\\
    &\sigma_n=\Bigg(\frac{5}{3}\Bigg)^{n/2}540^{\frac{3^n-1}{4}},\label{eq:sigma}\\
    &\rho_n=\Bigg(\frac{5}{3}\Bigg)^{3n/2}540^{\frac{3^n-1}{4}}.\label{eq:rho}
\end{align}
\begin{figure}
    \centering
    \begin{align*}
    \begin{tikzpicture}
        \fill[black] (0,0) -- (1/3,0) -- (2/3,1.732/3) -- (1/2,1.732/2);
        \fill[black] (1,0) -- (2/3,0) -- (1/6+2/3,1.732/6);
        \fill[black] (2,0) -- (1,0) -- (1/2+1,1.732/2);
        \fill[black] (1,1.732) -- (1/2+1,1.732/2) -- (1/2,1.732/2);
    \end{tikzpicture}
    &&
    \begin{tikzpicture}
        \fill[black] (0,0) -- (1,0) -- (1/2,1.732/2);
        \fill[black] (1,0) -- (1+1/3,0) -- (1+1/6,1.732/6);
        \fill[black] (1+2/3,0) -- (2,0) -- (1/2+1,1.732/2) -- (1/2+1-1/6,1.732/2-1.732/6);
        \fill[black] (1,1.732) -- (1/2+1,1.732/2) -- (1/2,1.732/2);
    \end{tikzpicture}
    &&
    \begin{tikzpicture}
        \fill[black] (0,0) -- (1,0) -- (1/2,1.732/2);
        \fill[black] (1/2+1,1.732/2) -- (3/2-1/6,1.732/2-1.732/6) -- (3/2+1/6,1.732/2-1.732/6);
        \fill[black] (1,0) -- (2,0) -- (2-1/6,1.732/6) -- (1+1/6,1.732/6);
        \fill[black] (1,1.732) -- (1/2+1,1.732/2) -- (1/2,1.732/2);
    \end{tikzpicture}
    \\
    \begin{tikzpicture}
        \fill[black] (0,0) -- (1,0) -- (1-1/6,1.732/6) -- (1/6,1.732/6);
        \fill[black] (1/2,1.732/2) -- (1/2-1/6,1.732/2-1.732/6) -- (1/2+1/6,1.732/2-1.732/6);
        \fill[black] (2,0) -- (1,0) -- (1/2+1,1.732/2);
        \fill[black] (1,1.732) -- (1/2+1,1.732/2) -- (1/2,1.732/2);
    \end{tikzpicture}
    &&
    \begin{tikzpicture}
        \fill[black] (0,0) -- (1,0) -- (1/2,1.732/2);
        \fill[black] (1/2,1.732/2) -- (1/2+1/3,1.732/2) -- (1+1/6,1.732-1.732/6) -- (1,1.732);
        \fill[black] (2,0) -- (1,0) -- (1/2+1,1.732/2);
        \fill[black] (3/2,1.732/2) -- (3/2-1/6,1.732/2+1.732/6) -- (3/2-1/3,1.732/2);
    \end{tikzpicture}
    &&
    \begin{tikzpicture}
        \fill[black] (0,0) -- (1,0) -- (1/2,1.732/2);
        \fill[black] (3/2,1.732/2) -- (3/2-1/3,1.732/2) -- (1-1/6,1.732-1.732/6) -- (1,1.732);
        \fill[black] (2,0) -- (1,0) -- (1/2+1,1.732/2);
        \fill[black] (1/2,1.732/2) -- (1/2+1/6,1.732/2+1.732/6) -- (1/2+1/3,1.732/2);
    \end{tikzpicture}
\end{align*}
    \caption{All possible configurations for $\treeSet{n}$}
    \label{fig:recursive_spanningTree}
\end{figure}

\begin{figure}
    \centering\begin{align*}
    \begin{tikzpicture}
        \fill[black] (0,0) -- (1,0) -- (1/2,1.732/2);
        \fill[black] (3/2,1.732/2) -- (3/2-1/3,1.732/2) -- (1-1/6,1.732-1.732/6) -- (1,1.732);
        \fill[black] (1/2,1.732/2) -- (1/2+1/6,1.732/2+1.732/6) -- (1/2+1/3,1.732/2);
        \fill[black] (1/2+1,1.732/2) -- (3/2-1/6,1.732/2-1.732/6) -- (3/2+1/6,1.732/2-1.732/6);
        \fill[black] (1,0) -- (2,0) -- (2-1/6,1.732/6) -- (1+1/6,1.732/6);
    \end{tikzpicture}
    &&
    \begin{tikzpicture}
        \fill[black] (0,0) -- (1,0) -- (1/2,1.732/2);
        \fill[black] (1,0) -- (1+1/3,0) -- (1+1/6,1.732/6);
        \fill[black] (1+2/3,0) -- (2,0) -- (1/2+1,1.732/2) -- (1/2+1-1/6,1.732/2-1.732/6);
        \fill[black] (1,1.732) -- (1+1/6,1.732-1.732/6) -- (1-1/6,1.732-1.732/6);
        \fill[black] (1/2,1.732/2) -- (1/2+1/6,1.732/2+1.732/6) -- (3/2-1/6,1.732/2+1.732/6) -- (3/2,1.732/2);
    \end{tikzpicture}
    &&
    \begin{tikzpicture}
        \fill[black] (0,0) -- (1,0) -- (1/2,1.732/2);
        \fill[black] (1/2+1,1.732/2) -- (3/2-1/6,1.732/2-1.732/6) -- (3/2+1/6,1.732/2-1.732/6);
        \fill[black] (1,0) -- (2,0) -- (2-1/6,1.732/6) -- (1+1/6,1.732/6);
        \fill[black] (1,1.732) -- (1+1/6,1.732-1.732/6) -- (1-1/6,1.732-1.732/6);
        \fill[black] (1/2,1.732/2) -- (1/2+1/6,1.732/2+1.732/6) -- (3/2-1/6,1.732/2+1.732/6) -- (3/2,1.732/2);
    \end{tikzpicture}\\
    \begin{tikzpicture}
        \fill[black] (2,0) -- (1,0) -- (1/2+1,1.732/2);
        \fill[black] (1/2,1.732/2) -- (1/2+1/3,1.732/2) -- (1+1/6,1.732-1.732/6) -- (1,1.732);
        \fill[black] (3/2,1.732/2) -- (3/2-1/6,1.732/2+1.732/6) -- (3/2-1/3,1.732/2);
        \fill[black] (0,0) -- (1,0) -- (1-1/6,1.732/6) -- (1/6,1.732/6);
        \fill[black] (1/2,1.732/2) -- (1/2-1/6,1.732/2-1.732/6) -- (1/2+1/6,1.732/2-1.732/6);
    \end{tikzpicture}
    &&
    \begin{tikzpicture}
        \fill[black] (2,0) -- (1,0) -- (1/2+1,1.732/2);
        \fill[black] (0,0) -- (1/3,0) -- (2/3,1.732/3) -- (1/2,1.732/2);
        \fill[black] (1,0) -- (2/3,0) -- (1/6+2/3,1.732/6);
        \fill[black] (1,1.732) -- (1+1/6,1.732-1.732/6) -- (1-1/6,1.732-1.732/6);
        \fill[black] (1/2,1.732/2) -- (1/2+1/6,1.732/2+1.732/6) -- (3/2-1/6,1.732/2+1.732/6) -- (3/2,1.732/2);
    \end{tikzpicture}
    &&
    \begin{tikzpicture}
        \fill[black] (2,0) -- (1,0) -- (1/2+1,1.732/2);
        \fill[black] (0,0) -- (1,0) -- (1-1/6,1.732/6) -- (1/6,1.732/6);
        \fill[black] (1/2,1.732/2) -- (1/2-1/6,1.732/2-1.732/6) -- (1/2+1/6,1.732/2-1.732/6);
        \fill[black] (1,1.732) -- (1+1/6,1.732-1.732/6) -- (1-1/6,1.732-1.732/6);
        \fill[black] (1/2,1.732/2) -- (1/2+1/6,1.732/2+1.732/6) -- (3/2-1/6,1.732/2+1.732/6) -- (3/2,1.732/2);
    \end{tikzpicture}\\
    \begin{tikzpicture}
        \fill[black] (0,0) -- (1,0) -- (1-1/6,1.732/6) -- (1/6,1.732/6);
        \fill[black] (1/2,1.732/2) -- (1/2-1/6,1.732/2-1.732/6) -- (1/2+1/6,1.732/2-1.732/6);
        \fill[black] (1/2+1,1.732/2) -- (3/2-1/6,1.732/2-1.732/6) -- (3/2+1/6,1.732/2-1.732/6);
        \fill[black] (1,0) -- (2,0) -- (2-1/6,1.732/6) -- (1+1/6,1.732/6);
        \fill[black] (1,1.732) -- (1/2+1,1.732/2) -- (1/2,1.732/2);
    \end{tikzpicture}
    &&
    \begin{tikzpicture}
        \fill[black] (2,0) -- (1,0) -- (1/2+1,1.732/2);
        \fill[black] (0,0) -- (1,0) -- (1/2,1.732/2);
        \fill[black] (1/2,1.732/2) -- (1/2+1/3,1.732/2) -- (1/2+1/6,1.732/2+1.732/6);
        \fill[black] (1,1.732) -- (1-1/6,1.732-1.732/6) -- (1+1/6,1.732-1.732/6);
        \fill[black] (3/2,1.732/2) -- (3/2-1/3,1.732/2) -- (3/2-1/6,1.732/2+1.732/6);
    \end{tikzpicture}
\end{align*}
    \caption{All possible configurations for $\twoCompSetTwo{n}$}
    \label{fig:recursive_twoComponent}
\end{figure}
\vspace{-0.25cm}

In order to highlight on which iteration of the Sierpi\'nski graph we are currently working on, we denote the number of neighbours that are descendants in a tree respectively in forest $t$ of $\SG_n$ - previously defined as $\mathsf{des}(t,\cdot)$ - by $\desc{n}{t}{\cdot}$, that is, we put the level of the graph in the subscript.

\paragraph{Choice of the sink.}
Depending on how we choose the sink vertices in $\SG_n$, we obtain a bijection between the recurrent sandpiles of the graph and the sets of spanning forests or trees previously described. The effect of the choice of the sink vertices (and of their number) will be visible after applying the burning bijection. In particular, a single sink vertex corresponds to the root of the spanning tree after applying the burning bijection; if we identify two different vertices as the sink, then we obtain a spanning forest with two connected components, whose roots will be given by the two sink vertices. Choosing more sink vertices will result in more connected components in the spanning forest obtained after applying the burning bijection. Throughout this paper, we will consider the following choices for the sink in the Sierpi\'nski gasket.
The first possibility is to let the sink be any one of
the three corner vertices, and then the recurrent sandpiles  will correspond to
spanning trees i.e. elements of $\mathcal{T}_n$ under the burning bijection.
Secondly, we can choose two of the corners and identify them as the sink of $\SG_n$. In this case, we get that the recurrent sandpiles are in bijection with elements of $\mathcal{S}_n^i\cup\mathcal{S}_n^j$ for some $i,j\in\{1,2,3\}$, where $i$ and $j$ depend on the choice of the two sink vertices. For example, when letting $\tcorner{n}$ and $\rcorner{n}$ be the two sinks, we get a bijection with $\mathcal{S}_n^2\cup\mathcal{S}_n^3$.
Finally, we could also declare all three corners as our sinks, in which case we end up with a bijection between the recurrent sandpile and the set $\mathcal{R}_n$. For the remainder of the paper, we will make thorough use of the iterative construction of spanning trees and forests on the Sierpinski gasket graphs as shown in Figures \ref{fig:recursive_spanningTree},  \ref{fig:recursive_twoComponent}, and \ref{fig:recursive_threeComponent}. This will give us different formulas for the height probabilities in the three different cases of choosing the sink.

\begin{figure}
    \centering
    \begin{align*}
    \begin{tikzpicture}
        \fill[black] (1,0) -- (1+1/3,0) -- (1+1/6,1.732/6);
        \fill[black] (1+2/3,0) -- (2,0) -- (1/2+1,1.732/2) -- (1/2+1-1/6,1.732/2-1.732/6);
        \fill[black] (0,0) -- (1,0) -- (1/2,1.732/2);
        \fill[black] (1/2,1.732/2) -- (1/2+1/3,1.732/2) -- (1/2+1/6,1.732/2+1.732/6);
        \fill[black] (1,1.732) -- (1-1/6,1.732-1.732/6) -- (1+1/6,1.732-1.732/6);
        \fill[black] (3/2,1.732/2) -- (3/2-1/3,1.732/2) -- (3/2-1/6,1.732/2+1.732/6);
    \end{tikzpicture}\times 6
    &&
    \begin{tikzpicture}
        \fill[black] (1,0) -- (1+1/3,0) -- (3/2+1/6,1.732/2-1.732/6) -- (3/2,1.732/2);
        \fill[black] (2,0) -- (2-1/3,0) -- (2-1/6,1.732/6);
        \fill[black] (0,0) -- (1,0) -- (1/2,1.732/2);
        \fill[black] (1/2,1.732/2) -- (1/2+1/3,1.732/2) -- (1/2+1/6,1.732/2+1.732/6);
        \fill[black] (1,1.732) -- (1-1/6,1.732-1.732/6) -- (1+1/6,1.732-1.732/6);
        \fill[black] (3/2,1.732/2) -- (3/2-1/3,1.732/2) -- (3/2-1/6,1.732/2+1.732/6);
    \end{tikzpicture}\times 6
    &&
    \begin{tikzpicture}
        \fill[black] (1,0) -- (1+1/3,0) -- (3/2+1/6,1.732/2-1.732/6) -- (3/2,1.732/2);
        \fill[black] (2,0) -- (2-1/3,0) -- (2-1/6,1.732/6);
        \fill[black] (0,0) -- (1,0) -- (1-1/6,1.732/6) -- (1/6,1.732/6);
        \fill[black] (1/2,1.732/2) -- (1/2-1/6,1.732/2-1.732/6) -- (1/2+1/6,1.732/2-1.732/6);
        \fill[black] (1/2,1.732/2) -- (1/2+1/3,1.732/2) -- (1+1/6,1.732-1.732/6) -- (1,1.732);
        \fill[black] (3/2,1.732/2) -- (3/2-1/6,1.732/2+1.732/6) -- (3/2-1/3,1.732/2);
    \end{tikzpicture}\times 6
    \\
    \begin{tikzpicture}
        \fill[black] (1,0) -- (1+1/3,0) -- (3/2+1/6,1.732/2-1.732/6) -- (3/2,1.732/2);
        \fill[black] (2,0) -- (2-1/3,0) -- (2-1/6,1.732/6);
        \fill[black] (0,0) -- (1,0) -- (1-1/6,1.732/6) -- (1/6,1.732/6);
        \fill[black] (1/2,1.732/2) -- (1/2-1/6,1.732/2-1.732/6) -- (1/2+1/6,1.732/2-1.732/6);
        \fill[black] (1,1.732) -- (1+1/6,1.732-1.732/6) -- (1-1/6,1.732-1.732/6);
        \fill[black] (1/2,1.732/2) -- (1/2+1/6,1.732/2+1.732/6) -- (3/2-1/6,1.732/2+1.732/6) -- (3/2,1.732/2);
    \end{tikzpicture}\times 6
    &&
    \begin{tikzpicture}
        \fill[black] (1,0) -- (1+1/3,0) -- (1+1/6,1.732/6);
        \fill[black] (1+2/3,0) -- (2,0) -- (1/2+1,1.732/2) -- (1/2+1-1/6,1.732/2-1.732/6);
        \fill[black] (0,0) -- (1,0) -- (1-1/6,1.732/6) -- (1/6,1.732/6);
        \fill[black] (1/2,1.732/2) -- (1/2-1/6,1.732/2-1.732/6) -- (1/2+1/6,1.732/2-1.732/6);
        \fill[black] (1/2,1.732/2) -- (1/2+1/3,1.732/2) -- (1+1/6,1.732-1.732/6) -- (1,1.732);
        \fill[black] (3/2,1.732/2) -- (3/2-1/6,1.732/2+1.732/6) -- (3/2-1/3,1.732/2);
    \end{tikzpicture}\times 2
\end{align*}
    \caption{Up to rotation and reflection, all possible configurations for $\threeCompSet{n}$ }
    \label{fig:recursive_threeComponent}
\end{figure}
\section{Height probabilities}\label{sec:height-prob}

We calculate here the height probabilities for corner vertices and cut points, and we give an algorithmic approach to calculate the height probabilities for any other vertex in $\SG_n$. In order to do so, we  use the connection to the number of neighbours that are descendants in an uniformly chosen spanning tree (resp. forest) as shown in Lemma \ref{lem:desc-conn-to-height}.
More precisely,  for any $v\in \SG_n$ we calculate the probability that exactly $k$ neighbors of $v$ are descendants in the spanning forest for the three different sink configurations$$\Prob(\desc{n}{\mathrm{UST}}{v}=k), \quad 0\leq k < \deg(v).$$
We do so by looking first at the probabilities of the roots, corners and cut points in each iteration of the Sierpi\'nski graph $\SG_n$. We then calculate the height probabilities of the remaining vertices by combining the height probabilities of the previous iterations in the three subtriangles of $\SG_n$ for every $n\in\N$. This is possible, since cutpoints act as roots or corner vertices in the subtriangles and the neighboring descendants of the remaining vertices stay the descendants in the subtriangles.
We denote by $\Prob$ the uniform measure on the set $\mathcal{Q}_n := \mathcal{T}_n\cup \mathcal{S}_n^1 \cup \mathcal{S}_n^2\cup \mathcal{S}_n^3\cup \mathcal{R}_n$. Conditioning on the number of components results again in a uniform measure, i.e. $\Prob( \cdot \ | \ t\in \mathcal{T}_n)$ is the uniform measure on $\mathcal{T}_n$.

\subsection{Probabilities at corner points}
We first calculate the probabilities for the various numbers of neighboring descendants at non-root corner points, in both the tree and the 2-component forest settings. This corresponds to the height probabilities in the ASM with the single root as the sink and the two roots as a multiple sink respectively. Note that by symmetry, the probabilities at the corner point $\lcorner{n}$ for forests in $\mathcal{S}_n^2$ and $\mathcal{S}_n^3$ are the same. Furthermore the symmetry also yields that the probabilities at the corner points $\lcorner{n}$ and $\rcorner{n}$ for trees $\mathcal{T}_n$ are equal. 
We denote the probabilities of corner points having $k$ neighboring descendants by
\begin{align*}
    \probCorner{1}{n}(k) = \Prob\big(\desc{n}{t}{\lcorner{n}} = k \ | \ t \in \mathcal{T}_n\big), \quad \probCorner{2}{n}(k) = \Prob\big(\desc{n}{t}{\lcorner{n}} = k \ | \ t \in \mathcal{S}_n^2\big), \quad k=0,1.
\end{align*}
By going through all the cases shown in Figure \ref{fig:recursive_spanningTree} and Figure \ref{fig:recursive_twoComponent} we get \begin{align*}
    \probCorner{1}{n}(k) &= 4 \frac{\tau_{n-1}^2\sigma_{n-1}}{\tau_n} \probCorner{1}{n-1}(k) + 2 \frac{\tau_{n-1}^2\sigma_{n-1}}{\tau_n} \probCorner{2}{n-1}(k), \\
    \probCorner{2}{n}(k) &= 3 \frac{\tau_{n-1}\sigma_{n-1}^2}{\sigma_n} \probCorner{1}{n-1}(k) + 4 \frac{\tau_{n-1}\sigma_{n-1}^2}{\sigma_n} \probCorner{2}{n-1}(k) + \frac{\tau_{n-1}^2\rho_{n-1}}{\sigma_n} \probCorner{1}{n-1}(k),
\end{align*}
which together with equations (\ref{eq:tau})-(\ref{eq:rho}) implies
\begin{align*}
    \begin{pmatrix}
    \probCorner{1}{n}(k)\\\probCorner{2}{n}(k)
    \end{pmatrix}
    =
    \begin{pmatrix}
    2/3 && 1/3\\
    3/5 && 2/5\\
    \end{pmatrix}^{n}
    \begin{pmatrix}
    \probCorner{1}{0}(k)\\
    \probCorner{1}{0}(k)
    \end{pmatrix}.
\end{align*}
The powers of the matrix in the equation above can be calculated by the diagonalization method and are given by
\begin{align*}
        \begin{pmatrix}
    2/3 && 1/3\\
    3/5 && 2/5\\
    \end{pmatrix}^{n}
    =
    \frac{15^{-n}}{14}\begin{pmatrix}
        3^{n+2}\cdot5^n+5 && -5(1-15^n)\\
        -9(1-15^n) && 3^n\cdot 5^{n+1} + 9
    \end{pmatrix}.
\end{align*}
We can then finally calculate the probabilities for $\SG_0$ by going through all the cases of spanning trees and 2-component forests, in order to obtain the initial values
\begin{align*}
    \probCorner{1}{0}(0)&=2/3,&\probCorner{1}{0}(1)&=1/3\\
    \probCorner{2}{0}(0)&=1,&\probCorner{2}{0}(1)&=0,
\end{align*}
and for $n\geq 1$
\begin{align*}
    \probCorner{1}{n}(0) = \frac{11}{14} - \frac{5}{42} 15^{-n}, \quad p_1^{(n)}(1) = \frac{3}{14} + \frac{5}{42} 15^{-n}, \\
    \probCorner{2}{n}(0) = \frac{11}{14} + \frac{3}{14} 15^{-n}, \quad p_2^{(n)}(1) = \frac{3}{14} - \frac{3}{14} 15^{-n}.
\end{align*}

\subsection{Neighbours in the same component}\label{subsec:neighborsSameComp}

Next, we want to calculate  the probabilities of the number of descendants for the root vertices in 2-component and 3-component spanning forests of $\SG_n$. Notice that for roots, a descendant vertex is exactly a vertex that lies in the same component as the root, which is the basis of this section's title. 
Although the roots of the forests act as the sinks in the ASM and therefore do not posses any form of height, the calculations made here are crucial for our arguments because the cutpoints may act as roots in the subforests for the decomposition of $\SG_{n}$ into three copies of $\SG_{n-1}$. We need to distinguish the cases for $\twoCompSetTwo{n}$ and $\twoCompSetThree{n}$ for 2-component forests, since they appear a different number of times in the construction of 2- and 3-component forests.
For this purpose denote for $k=0,1,2$
\begin{align*}
    \nRole{2}{n}(k) &= \Prob\big( \desc{n}{t}{\rcorner{n}} = k \ | \ t \in \twoCompSetTwo{n}\big),\\
    \noRole{2}{n}(k) &= \Prob\big( \desc{n}{t}{\tcorner{n}} = k \ | \ t \in \twoCompSetTwo{n}\big),\\
    \nRole{3}{n}(k) &= \Prob\big( \desc{n}{t}{\tcorner{n}} = k \ | \ t \in \threeCompSet{n}\big).
\end{align*}


Going through all the cases from Figure \ref{fig:recursive_twoComponent} and Figure \ref{fig:recursive_threeComponent} we obtain the following linear recursion:
\begin{align*}
    \begin{pmatrix}
        \nRole{2}{n}(k)\\ \noRole{2}{n}(k) \\ \nRole{3}{n}(k)
    \end{pmatrix}
    =
    \begin{pmatrix}
        {12/30} && 0 && 0\\
        6/30 && 12/30 && 9/30\\
        14/50 && 12/50 && 12/50
    \end{pmatrix}
    \begin{pmatrix}
        \nRole{2}{n-1}(k)\\ \noRole{2}{n-1}(k) \\ \nRole{3}{n-1}(k)
    \end{pmatrix}
    +
    \begin{pmatrix}
        {18/30}\\ 3/30 \\ 12/50
    \end{pmatrix}\delta_2(k).
\end{align*}
We can again calculate the powers of the matrix by  an eigenvalue decomposition in order to obtain
\begin{align*}
    \begin{pmatrix}
        112\cdot 5^{-n} && 0 && 0\\
        5^{-2n}(-133\cdot 5^n +26\cdot 3^{n+1}\cdot5^n+55) && 8\cdot5^{-2n}(3^{n+2}\cdot5^n+5) && -12\cdot5^{1-2n}(1-15^n)\\
        2\cdot5^{-2n}(7\cdot5^n+26\cdot15^n-33) && -48\cdot5^{-2n}(1-15^n) && 8\cdot5^{-2n}(3^n\cdot5^{n+1}+9)
    \end{pmatrix}
\end{align*}
as the $n$-th power of the matrix in the linear recursion. In order to solve this recursion, we use again the probabilities for the 0-th iteration, given by
\begin{align*}
    \nRole{2}{0}(0)=0,&&\nRole{2}{0}(1)=1,&&\nRole{2}{0}(2)=0,\\
    \noRole{2}{0}(0)=1,&&\noRole{2}{0}(1)=0,&&\noRole{2}{0}(2)=0,\\
    \nRole{3}{0}(0)=1,&&\nRole{3}{0}(1)=0,&&\nRole{3}{0}(2)=0.
\end{align*}
For the exact solutions of  $\nRole{n}{2},\noRole{n}{2},\nRole{n}{3}$ see Table \ref{tab:rootProbs}.
\begin{table}
    \centering
    \makebox[\textwidth][c]{
    \begin{tabular}[b]{c||c|c|c}
       & $k=0$ & $k=1$ & $k=2$ \\ \hline
        $\nRole{2}{n}(k)$ & 0 & $\left(\frac{2}{5}\right)^n$ & $1-\left(\frac{2}{5}\right)^n$ \\
        $\noRole{2}{n}(k)$ & $\frac{33}{28} \left(\frac{3}{5}\right)^n-\frac{5}{28} \left(\frac{1}{25}\right)^n$ & $\frac{39}{28} \left(\frac{3}{5}\right)^n - \frac{29}{18} \left(\frac{2}{5}\right)^n + \frac{55}{252} \left(\frac{1}{25}\right)^n$ & $1-\frac{18}{7} \left(\frac{3}{5}\right)^n+\frac{29}{18} \left(\frac{2}{5}\right)^n - \frac{5}{126}\left(\frac{1}{25}\right)^n$ \\
       $\nRole{3}{n}(k)$&  $\frac{11}{14} \left(\frac{3}{5}\right)^n-\frac{3}{14} \left(\frac{1}{25}\right)^n$ & $\frac{39}{42} \left(\frac{3}{5}\right)^n - \frac{28}{42} \left(\frac{2}{5}\right)^n + \frac{11}{42} \left(\frac{1}{25}\right)^n$ & $1-\frac{12}{7} \left(\frac{3}{5}\right)^n+\frac{2}{3} \left(\frac{2}{5}\right)^n - \frac{1}{21}\left(\frac{1}{25}\right)^n$ 
    \end{tabular}
    }
    \caption{Exact values of the probabilities for roots $\nRole{n}{2}, \noRole{n}{2}, \nRole{n}{3}$.}
    \label{tab:rootProbs}
\end{table}

\subsection{Probabilities at cut points}\label{subsection:cut-points}

Denote the cut points of the iteration $n$ by $\rcut{n},\bcut{n}$ and $\lcut{n}$ as in Figure \ref{fig:first-3-it-SG}. We can compute their respective probabilities of having $k$ descendants in a spanning tree, 2-component or 3-component forest respectively, by using the probabilities for cut points and number of neighbours in the same component calculated previously. We will briefly explain the procedure on the basis of the spanning trees. The general case works the same by going through all the combinations depicted in Figure \ref{fig:recursive_spanningTree}, \ref{fig:recursive_twoComponent} and \ref{fig:recursive_threeComponent}.
We consider the lower cut points $\bcut{n}$  and the following two cases:
\begin{align*}
    \begin{tikzpicture}
        \fill[black] (0,0) -- (1/3,0) -- (2/3,1.732/3) -- (1/2,1.732/2);
        \fill[black] (1,0) -- (2/3,0) -- (1/6+2/3,1.732/6);
        \fill[black] (2,0) -- (1,0) -- (1/2+1,1.732/2);
        \fill[black] (1,1.732) -- (1/2+1,1.732/2) -- (1/2,1.732/2);
        \node[label=below:$\bcut{n}$] (B) at (1,0) {};
        \node[] (B) at (1,1.732) {$\circ$};
    \end{tikzpicture}
    &&
\begin{tikzpicture}
        \fill[black] (0,0) -- (1,0) -- (1-1/6,1.732/6) -- (1/6,1.732/6);
        \fill[black] (1/2,1.732/2) -- (1/2-1/6,1.732/2-1.732/6) -- (1/2+1/6,1.732/2-1.732/6);
        \fill[black] (2,0) -- (1,0) -- (1/2+1,1.732/2);
        \fill[black] (1,1.732) -- (1/2+1,1.732/2) -- (1/2,1.732/2);
        \node[label=below:$\bcut{n}$] (B) at (1,0) {};
        \node[] (B) at (1,1.732) {$\circ$};
    \end{tikzpicture}
\end{align*}
Consider first the neighbours of the right sub triangle. The path from them to the root $\tcorner{n}$  cannot go through the left sub triangle, hence it must go through the top corner of the smaller copy on the right. But this means that, if the neighbours are descendants of the cut point, then the same is true for the smaller spanning tree in the right copy and viceversa.
Now the unique path from the neighbours of the left triangle can either go directly to the top or through the right triangle. If it goes through the right triangle, then it must cross the cut point, in which case the neighbours are descendants of the cut point. If it does not go through the cut point, then the neighbours must lie in the other connected component of the two component spanning forest in the smaller left triangle. So we see that the number of descendants is simply the number of descendants in the right triangle combined with the neighbours of the cut point in the left triangle that lie in the same connected component of the spanning forest in the left triangle.
Now the same observations are true for the cases:
\begin{align*}
    \begin{tikzpicture}
        \fill[black] (0,0) -- (1,0) -- (1/2,1.732/2);
        \fill[black] (1,0) -- (1+1/3,0) -- (1+1/6,1.732/6);
        \fill[black] (1+2/3,0) -- (2,0) -- (1/2+1,1.732/2) -- (1/2+1-1/6,1.732/2-1.732/6);
        \fill[black] (1,1.732) -- (1/2+1,1.732/2) -- (1/2,1.732/2);
        \node[label=below:$\bcut{n}$] (B) at (1,0) {};
        \node[] (B) at (1,1.732) {$\circ$};
    \end{tikzpicture}
    &&
    \begin{tikzpicture}
        \fill[black] (0,0) -- (1,0) -- (1/2,1.732/2);
        \fill[black] (1/2+1,1.732/2) -- (3/2-1/6,1.732/2-1.732/6) -- (3/2+1/6,1.732/2-1.732/6);
        \fill[black] (1,0) -- (2,0) -- (2-1/6,1.732/6) -- (1+1/6,1.732/6);
        \fill[black] (1,1.732) -- (1/2+1,1.732/2) -- (1/2,1.732/2);
        \node[label=below:$\bcut{n}$] (B) at (1,0) {};
        \node[] (B) at (1,1.732) {$\circ$};
    \end{tikzpicture}
\end{align*}
Finally let us consider the last two cases:
\begin{align*}
    \begin{tikzpicture}
        \fill[black] (0,0) -- (1,0) -- (1/2,1.732/2);
        \fill[black] (1/2,1.732/2) -- (1/2+1/3,1.732/2) -- (1+1/6,1.732-1.732/6) -- (1,1.732);
        \fill[black] (2,0) -- (1,0) -- (1/2+1,1.732/2);
        \fill[black] (3/2,1.732/2) -- (3/2-1/6,1.732/2+1.732/6) -- (3/2-1/3,1.732/2);
        \node[label=below:$\bcut{n}$] (B) at (1,0) {};
        \node[] (B) at (1,1.732) {$\circ$};
    \end{tikzpicture}
    &&
    \begin{tikzpicture}
        \fill[black] (0,0) -- (1,0) -- (1/2,1.732/2);
        \fill[black] (3/2,1.732/2) -- (3/2-1/3,1.732/2) -- (1-1/6,1.732-1.732/6) -- (1,1.732);
        \fill[black] (2,0) -- (1,0) -- (1/2+1,1.732/2);
        \fill[black] (1/2,1.732/2) -- (1/2+1/6,1.732/2+1.732/6) -- (1/2+1/3,1.732/2);
        \node[label=below:$\bcut{n}$] (B) at (1,0) {};
        \node[] (B) at (1,1.732) {$\circ$};
    \end{tikzpicture}
\end{align*}
For the case on the left, the unique path from all the points in the small right triangle to the top corner must go through the left triangle, hence both neighbours of the cut point in the right triangle are descendants of the cut point. For the neighbours in the left triangle we again observe that the number of descendants is simply the number of descendants when we consider the spanning tree on the left. Thus we obtain that the number of descendants is two plus the number of descendants in the left. For the second case on the right we make the same observations after switching the roles of the left and right triangle. We thus obtain the following equation for the probability that $\bcut{n}$ has $k$ neighbours as descendants in a spanning tree:
\begin{align*}
    \mathbb{P}(\desc{n}{T}{\cutpointsymbol_{2}^{n}} =&k \ | \ T\in\mathcal{T}_n)=\\
    &\frac{2}{3}\Bigg(\sum_{i=0}^k p_1^{(n-1)}(i)\cdot \frac{\eta_2^{(n-1)}(k-i)+\overline{\eta}_2^{(n-1)}(k-i)}{2}\Bigg)+\frac{1}{3}p_1^{(n-1)}(k-2).
\end{align*}
Notice that all the probabilities on the right-hand side above have been calculated in the previous subsections, hence we can calculate the probabilities for $\bcut{n}$. By symmetry, $\lcut{n}$ and $\rcut{n}$ have the same probabilities, hence it suffices to calculate the probabilities for $\lcut{n}$. Again going through all the cases for spanning trees we obtain
\begin{align*}
    \mathbb{P}(\desc{n}{T}{\cutpointsymbol_{3}^{n}} =k|T\in\mathcal{T}_n)=   
    &\frac{1}{6}\Bigg(\sum_{i=0}^k \Big[2p_1^{(n-1)}(i)\noRole{2}{n-1}(k-i)+p_1^{(n-1)}(i)\nRole{2}{n-1}(k-i)\Big]\\&+2p_1^{(n-1)}(k-2)+p_2^{(n-1)}(k-2)\Bigg).
\end{align*}
Going through all the cases in Figure \ref{fig:recursive_twoComponent}, we can also calculate the probabilities for the cut points in a spanning forest of type $\mathcal{S}_n^2$. For $\bcut{n}$ we have
\begin{align*}
    \mathbb{P}\big(\desc{n}{T}{\bcut{n}} = &k \ | \  T \in \mathcal{S}_n^2 \big)=&\\
    &\frac{1}{10}\Bigg(\sum_{i=0}^k\Big[2p_1^{(n-1)}(i)\noRole{2}{n-1}(k-i)+2p_1^{(n-1)}(i)\nRole{2}{n-1}(k-i)\\&+p_2^{(n-1)}(i)\nRole{2}{n-1}(k-i)\Big]+2p_2^{(n-1)}(k-2)\Bigg)+\frac{3}{10}p_1^{(n-1)}(k-2),
\end{align*}
while for $\rcut{n}$ we obtain
\begin{align*}
    \mathbb{P}\big(\desc{n}{T}{\rcut{n}} = &k \ | \  T \in \mathcal{S}_{n}^2 \big)=\\
    &\frac{1}{10}\sum_{i=0}^k\Big[2p_1^{(n-1)}(i)\noRole{2}{n-1}(k-i)+2p_1^{(n-1)}(i)\nRole{2}{n-1}(k-i)\\&+p_2^{(n-1)}(i)\nRole{2}{n-1}(k-i)+2p_2^{(n-1)}(i)\noRole{2}{n-1}(k-i)\Big]\\&+\frac{3}{10}\sum_{i=0}^k p_1^{(n-1)}(i)\nRole{3}{n-1}(k-i),
\end{align*}
and finally for $\lcut{n}$
\begin{align*}
    \mathbb{P}\big(\desc{n}{T}{\lcut{n}} = &k \ | \ T \in \mathcal{S}_{n}^2 \big) =\\
    & \frac{1}{10} \Bigg( \sum_{k=0}^k \Big[ 2 p_1^{(n-1)}(i) \noRole{2}{n-1}(k-i) + p_1^{(n-1)}(i) \nRole{2}{n-1}(k-i) \\ & + p_2^{(n-1)} (i)\nRole{2}{n-1}(k-i) +2 p_2^{(n-1)}(i) \noRole{2}{n-1}(k-i) \Big] + p_2^{(n-1)}(k-2) \Bigg) \\ & + \frac{3}{10} \sum_{i=0}^k p_1^{(n-1)}(i) \nRole{3}{n-1}(k-i).
\end{align*}
For the cut points in a three component forest, we have by symmetry that they all have the same probabilities, hence it suffices to do the calculations for $\bcut{n}$. Again by the same approach as above and going through all the cases in Figure \ref{fig:recursive_threeComponent} we obtain
\begin{align*}
    \mathbb{P}\big(\desc{n}{T}{\bcut{n}} = &k \ | \  T \in \mathcal{R}_n \big)=\\
    &\frac{3}{50}\sum_{i=0}^k\Bigg[4p_1^{(n-1)}(i)\eta_3^{(n-1)}(k-i)+2p_1^{(n-1)}(i)\noRole{2}{n-1}(k-i)\\
    &+4p_2^{(n-1)}(i)\eta_3^{(n-1)}(k-i)+2p_1^{(n-1)}(i)\nRole{2}{n-1}(k-i)\Bigg]\\
    &+\frac{1}{50}\sum_{i=0}^k\Bigg[8p_2^{(n-1)}(i)\noRole{2}{n-1}(k-i)+6p_2^{(n-1)}(i)\nRole{2}{n-1}(k-i)\Bigg].
\end{align*}
The probabilities for cut points can be calculated and  we collect the exact values in Appendix \ref{sec:appendixComp} but omit the simple proof of induction.

\subsection{Probabilities for arbitrary vertices}

Finally, we can calculate the probabilities for all vertices of the level $n$ Sierpi\'nski graph $\SG_n$. This can be done inductively, and we describe here our approach.
For all vertices in $B_n = \{ \lcorner{n}, \tcorner{n}, \rcorner{n}, \rcut{n}, \bcut{n}, \lcut{n} \}$ the probabilities can be calculated as elaborated in the previous sections. 
All other vertices are contained in exactly one of the three copies of $\SG_{n-1}$ in $\SG_n$, denoted by $\SG_{n-1}^L,\SG_{n-1}^U, \SG_{n-1}^R$ as the left, upper, and right sub triangle of $\SG_n$, respectively.
Let us assume that $v$ is in $\SG_{n-1}^L$ within $\SG_n$. Then for any given $t \in \mathcal{Q}_n$, the number of neighbours that are descendants of $v$ within $t$ is the same as the number of neighbours that are descendants of $v$ within the subforest of $t$ in $\SG_{n-1}^L$. Hence, we can once again obtain the probabilities for $v$ by counting the number of appearances of trees, 2-component and 3-component spanning forests within the corresponding copy of $\SG_{n-1}$ within $\SG_n$.
We denote for any $v\in \SG_n$
\begin{align*}
    \probVertices{1}{n}(v) &= \Prob(\desc{n}{t}{v} \ | \ t \in \treeSet{n}), \\
    \probVertices{2}{n}(v) &= \Prob(\desc{n}{t}{v} \ | \ t \in \twoCompSetTwo{n}), \\
    \probVertices{3}{n}(v) &= \Prob(\desc{n}{t}{v} \ | \ t \in \threeCompSet{n}),
\end{align*}
and let  $r_n:\SG_{n}\rightarrow \SG_{n}$ be a clockwise rotation by $120^\circ$, $m_1$ be a reflection along an axis such that $\twoCompSetOne{n-1}$ stays invariant and choose $m_2,m_3$ as reflection accordingly for $\twoCompSetTwo{n-1}$ and $\twoCompSetThree{n-1}$.
By symmetry we have 
\begin{align*}
    \Prob\left(\desc{n}{t}{v} \ | \ t \in \twoCompSetOne{n}\right) = \probVertices{2}{n}(r_n(v)), \quad
    \Prob\left(\desc{n}{t}{v} \ | \ t \in \twoCompSetThree{n}\right) = \probVertices{2}{n}(r_n^{-1}(v)).
\end{align*}
Write $\probVertices{i,d}{n}$ for the restriction of $\probVertices{i}{n}$ to $\SG_{n-1}^d \backslash B_n$ and $\phi_d$ for the restriction of the natural mapping from $\SG_{n-1}^d$ to $\SG_{n-1}$ where $i=1,2,3$ and $d=L,U,R$. Then according to Figure \ref{fig:recursive_spanningTree} we get for the probabilities of the left sub triangle
\begin{align*}
    &\probVertices{1,L}{n} = \frac{1}{6}\left(3\probVertices{1}{n-1}+\probVertices{1}{n-1}\circ r^{-1}_{n-1}+\probVertices{2}{n-1} + \probVertices{2}{n-1}\circ m_1 \right) \circ \phi_L.
\end{align*}
In the same manner we can calculate the probabilities in the lower right and upper triangles
\begin{align*}
    &p_{1,R}^{(n)}=\frac{1}{6} \left(3\probVertices{1}{n-1}+\probVertices{1}{n-1}\circ r_{n-1}+\probVertices{2}{n-1} \circ r_{n-1}+ \probVertices{2}{n-1}\circ m_2 \right)\circ \phi_R,\\
    &p_{1,U}^{(n)}=\frac{1}{6}\left(4\probVertices{1}{n-1}+\probVertices{2}{n-1} \circ r_{n-1}+\probVertices{2}{n-1} \circ m_1 \right) \circ \phi_U.
\end{align*}
For 2-component forests we obtain
\begin{align*}
    &\probVertices{2,L}{n}=\frac{1}{10}\left(\probVertices{1}{n-1}+5\probVertices{1}{n-1}\circ r_{n-1}^{-1}+3\probVertices{2}{n-1}+\probVertices{2}{n-1}\circ m_1\right)\circ \phi_L,\\ 
    &\probVertices{2,R}{n}=\frac{1}{10}\left(6 \probVertices{1}{n-1}\circ r^{-1}_{n-1}+3\probVertices{2}{n-1}+\probVertices{2}{n-1} \circ m_3 \right) \circ \phi_R,\\
    &\probVertices{2,U}{n}=\frac{1}{10}\left(\probVertices{1}{n-2}+3\probVertices{2}{n-1}+\probVertices{2}{n-1} \circ r_{n-1}+\probVertices{2}{n-1} \circ m_1 + \probVertices{2}{n-1} \circ m_2+3\probVertices{3}{n-1} \right)\circ \phi_U.
\end{align*}
Finally, for 3-component forests we have
\begin{align*}
	\probVertices{3,L}{n}=&\frac{1}{50} \big(12 \probVertices{3}{n-1}+12 \probVertices{1}{n-1} \circ r+7 \probVertices{2}{n-1} \circ m_2\\
	&+7 \probVertices{2}{n-1} \circ r_{n-1}^{-1} + 6 \probVertices{2}{n-1} \circ m_3 +6\probVertices{2}{n} \circ r_{n-1} \big)\circ \phi_L,\\
    p_{3,R}^{(n)}=&p_{3,L}^{(n)}\circ r_n \circ \phi_R,\\
    p_{3,U}^{(n)}=&p_{3,L}^{(n)}\circ r^{-1}_n \circ \phi_U.
\end{align*}
This now describes a recursive algorithm with which we can calculate the height probabilities up to any given level $n\in\N$ for the Sierpi\'nski graphs. See Figure \ref{fig:probabilities_sim} for the calculations of level $n=4$. 

\begin{figure}
    \centering
    \include{gasket_height_tikz}
    \caption{Probabilities for the number of neighbours that are descendants in a two-component forest of $\SG_4$, where the right and upper corners are in distinct connected components.}
    \label{fig:probabilities_sim}
\end{figure}

\section{Expected height}\label{sec:expHeight}
This section is devoted to calculating the expected height of a sandpile as well as the expected number of vertices of height $i$ for $i\in\{0,1,2,3\}$ of a sandpile sampled from the stationary distribution of the Abelian sandpile model on $\SG_n$.
Denote by $\lcorner{n},\tcorner{n},\rcorner{n}$ the corner vertices of $\SG_n$. For a sandpile configuration $\sigma$, we define the total weight of the sandpile by
\begin{align*}
    W_n(\sigma)=\sum_{v\in\SG_n\backslash\{\lcorner{n},\rcorner{n},\tcorner{n}\}}\sigma(v),
\end{align*}
and the weight of the number of vertices of height $i$ for $i\in\{0,1,2,3\}$
\begin{align*}
    W_n^i(\sigma)=\sum_{v\in\SG_n\backslash\{\lcorner{n},\rcorner{n},\tcorner{n}\}}\delta_i(\sigma(v)).
\end{align*}
We use again the burning bijection to derive expressions for the expectations of $W_n$ and $W_n^i$ based on the average number of neighbours that are descendants of each vertex. 
Given a forest $T\in\mathcal{Q}_n=\mathcal{T}_n\cup \mathcal{S}_n^1\cup\mathcal{S}_n^2\cup\mathcal{S}_n^3\cup\mathcal{R}_n$, we define the total number of descendants of $T$ by
\begin{align*}
    D_n(T)=\sum_{v\in\SG_n\backslash\{\lcorner{n},\rcorner{n},\tcorner{n}\}}\desc{n}{T}{v},
\end{align*}
as well as the total number of vertices in $\SG_n$ that have $i$ neighbours as descendants for $i\in\{0,1,2,3\}$ by
\begin{align*}
    D_n^i(T)=\sum_{v\in\SG_n\backslash\{\lcorner{n},\rcorner{n},\tcorner{n}\}}\delta_i(\desc{n}{T}{v}).
\end{align*}
In order to simplify the computations of the expectations, we also introduce the following notation
\begin{align*}
    \overline{D}_n(\onecomp)&=\mathbb{E}\big[D_n(T)|T\in\mathcal{T}_n\big],\\
    \overline{D}_n(\twocompleft)&=\mathbb{E}\big[D_n(T)|T\in\mathcal{S}_n^1\big],\\
    \overline{D}_n(\twocompup)&=\mathbb{E}\big[D_n(T)|T\in\mathcal{S}_n^2\big],\\
    \overline{D}_n(\twocompright)&=\mathbb{E}\big[D_n(T)|T\in\mathcal{S}_n^3\big],\\
    \overline{D}_n(\threecomp)&=\mathbb{E}\big[D_n(T)|T\in\mathcal{R}_n\big].
\end{align*}
Similarly, we introduce the notation for the expected number of vertices in $\SG_n$ that have $i$ neighbours as descendants for $i\in\{0,1,2,3\}$ as
\begin{align*}
    \overline{D}_n^i(\onecomp)&=\mathbb{E}\big[D_n^i(T)|T\in\mathcal{T}_n\big],\\
    \overline{D}_n^i(\twocompleft)&=\mathbb{E}\big[D_n^i(T)|T\in\mathcal{S}_n^1\big],\\
    \overline{D}_n^i(\twocompup)&=\mathbb{E}\big[D_n^i(T)|T\in\mathcal{S}_n^2\big],\\
    \overline{D}_n^i(\twocompright)&=\mathbb{E}\big[D_n^i(T)|T\in\mathcal{S}_n^3\big],\\
    \overline{D}_n^i(\threecomp)&=\mathbb{E}\big[D_n^i(T)|T\in\mathcal{R}_n\big].
\end{align*}
It holds
\begin{align} \label{eq:expected-desc}
    \overline{D}_n(\onecomp)=\sum_{i=0}^3 i\overline{D}_n^i(\onecomp),
\end{align}
and similarly for the other forests on $\SG_n$. This formula is easily obtained by plugging in the definition of expectation for discrete random variables. Our goal is to calculate $\overline{D}_n^i$ for the different component forests we have on $\SG_n$. We will make use of the recursive structure of forests on $\SG_n$ as described in \cite{ust-on-gasket} and once again in Figure \ref{fig:recursive_spanningTree}, \ref{fig:recursive_twoComponent} and \ref{fig:recursive_threeComponent}. Noticing that the number of descendants of a vertex that is not a cut point in a forest $T$ on $\SG_n$ is the same as the number of neighbours that are descendants in the forest of the smaller subtriangle, we obtain the recursion
\begin{equation}
\begin{aligned}\label{eq:recursion-i-descendants}
    \begin{pmatrix}
    \overline{D}_n^i(\onecomp)\\
    \overline{D}_n^i(\twocompleft)\\
    \overline{D}_n^i(\twocompup)\\
    \overline{D}_n^i(\twocompright)\\
    \overline{D}_n^i(\threecomp)
    \end{pmatrix}
    =&\frac{1}{150}\begin{pmatrix}
    300 & 50 & 50 & 50 & 0\\
    195 & 150 & 30 & 30 & 45\\
    195 & 30 & 150 & 30 & 45\\
    195 & 30 & 30 & 150 & 45\\
    108 & 78 & 78 & 78 & 108
    \end{pmatrix}\cdot
    \begin{pmatrix}
    \overline{D}_{n-1}^i(\onecomp)\\
    \overline{D}_{n-1}^i(\twocompleft)\\
    \overline{D}_{n-1}^i(\twocompup)\\
    \overline{D}_{n-1}^i(\twocompright)\\
    \overline{D}_{n-1}^i(\threecomp)
    \end{pmatrix}
    \\&+\sum_{j=1}^3 \begin{pmatrix}
    \mathbb{P}(\desc{n}{T}{\cutpointsymbol_j^n}=i|T\in\mathcal{T}_n)\\
    \mathbb{P}(\desc{n}{T}{\cutpointsymbol_j^n}=i|T\in\mathcal{S}_n^1)\\
    \mathbb{P}(\desc{n}{T}{\cutpointsymbol_j^n}=i|T\in\mathcal{S}_n^2)\\
    \mathbb{P}(\desc{n}{T}{\cutpointsymbol_j^n}=i|T\in\mathcal{S}_n^3)\\
    \mathbb{P}(\desc{n}{T}{\cutpointsymbol_j^n}=i|T\in\mathcal{R}_n)
    \end{pmatrix},
\end{aligned}
\end{equation}
where $\cutpointsymbol_1^n,\cutpointsymbol_2^n,\cutpointsymbol_3^n$ are the cutpoints in $\SG_n$; see Figure \ref{fig:first-3-it-SG}. The matrix
\begin{align*}
    M:=\begin{pmatrix}
    300 & 50 & 50 & 50 & 0\\
    195 & 150 & 30 & 30 & 45\\
    195 & 30 & 150 & 30 & 45\\
    195 & 30 & 30 & 150 & 45\\
    108 & 78 & 78 & 78 & 108
    \end{pmatrix}
\end{align*}
can be diagonalized and its eigenvalues are given by
\begin{align*}
    \lambda_1=450,\lambda_2=150,\lambda_2=120,\lambda_2=120,\lambda_2=18,
\end{align*}
while the corresponding eigenvectors are
\begin{align*}
    &v_1=(1,1,1,1,1),\ 
    v_2=(-1,1,1,1,3),\\
    &v_3=(0,-1,0,1,0),\ 
    v_4=(0,-1,1,0,0),\\
    &v_5=(125,-235,-235,-235,461).
\end{align*}
Let us now define the expected neighbours that are descendants of the cut points, as in Equation (\ref{eq:recursion-i-descendants}) in the second line, by
\begin{align*}
    \begin{pmatrix}
    e_n(\onecomp,i)\\
    e_n(\twocompleft,i)\\
    e_n(\twocompup,i)\\
    e_n(\twocompright,i)\\
    e_n(\threecomp,i)
    \end{pmatrix}
    =
    \sum_{j=1}^3 \begin{pmatrix}
    \mathbb{P}(\desc{n}{T}{\cutpointsymbol_j^n}=i|T\in\mathcal{T}_n)\\
    \mathbb{P}(\desc{n}{T}{\cutpointsymbol_j^n}=i|T\in\mathcal{S}_n^1)\\
    \mathbb{P}(\desc{n}{T}{\cutpointsymbol_j^n}=i|T\in\mathcal{S}_n^2)\\
    \mathbb{P}(\desc{n}{T}{\cutpointsymbol_j^n}=i|T\in\mathcal{S}_n^3)\\
    \mathbb{P}(\desc{n}{T}{\cutpointsymbol_j^n}=i|T\in\mathcal{R}_n)
    \end{pmatrix}.
\end{align*}
We then rewrite Equation (\ref{eq:recursion-i-descendants}) by repeatedly applying the recursion to all the terms of the form $\overline{D}_n^i(\cdot)$ to obtain
\begin{align*}
    \begin{pmatrix}
    \overline{D}_n^i(\onecomp)\\
    \overline{D}_n^i(\twocompleft)\\
    \overline{D}_n^i(\twocompup)\\
    \overline{D}_n^i(\twocompright)\\
    \overline{D}_n^i(\threecomp)
    \end{pmatrix}
    =&\frac{M}{150}
    \begin{pmatrix}
    \overline{D}_{n-1}^i(\onecomp)\\
    \overline{D}_{n-1}^i(\twocompleft)\\
    \overline{D}_{n-1}^i(\twocompup)\\
    \overline{D}_{n-1}^i(\twocompright)\\
    \overline{D}_{n-1}^i(\threecomp)
    \end{pmatrix}+
    \begin{pmatrix}
    e_n(\onecomp,i)\\
    e_n(\twocompleft,i)\\
    e_n(\twocompup,i)\\
    e_n(\twocompright,i)\\
    e_n(\threecomp,i)
    \end{pmatrix}
    =\sum_{j=0}^{n-1}\frac{M^j}{150^{j}}\begin{pmatrix}
    e_{n-j}(\onecomp,i)\\
    e_{n-j}(\twocompleft,i)\\
    e_{n-j}(\twocompup,i)\\
    e_{n-j}(\twocompright,i)\\
    e_{n-j}(\threecomp,i)
    \end{pmatrix}
\end{align*}
In the previous equation, we can rewrite the powers of $M$ using its eigenvalue decomposition as
$$M^j=S\begin{pmatrix}
18^j&0&0&0&0\\
0&120^j&0&0&0\\
0&0&120^j&0&0\\
0&0&0&150^j&0\\
0&0&0&0&450^j
\end{pmatrix}S^{-1},$$
where $S$ is the matrix whose columns are given as the eigenvectors of $M$. Using the matrix diagonalization of $M$ and plugging in the results on $e_n$ from Section \ref{subsection:cut-points} (see also the appendix for a closed form expression of $e_n$), we obtain the limits
\begin{align*}
    &\phantom{=\ }\lim_{n\rightarrow\infty}\frac{1}{|\SG_n|}\begin{pmatrix}
    \overline{D}_n^0(\onecomp)\\\overline{D}_n^1(\onecomp)\\\overline{D}_n^2(\onecomp)\\\overline{D}_n^3(\onecomp)
    \end{pmatrix}=\lim_{n\rightarrow\infty}\frac{1}{|\SG_n|}\begin{pmatrix}
    \overline{D}_n^0(\twocompleft)\\\overline{D}_n^1(\twocompleft)\\\overline{D}_n^2(\twocompleft)\\\overline{D}_n^3(\twocompleft)
    \end{pmatrix}=\lim_{n\rightarrow\infty}\frac{1}{|\SG_n|}\begin{pmatrix}
    \overline{D}_n^0(\twocompup)\\\overline{D}_n^1(\twocompup)\\\overline{D}_n^2(\twocompup)\\\overline{D}_n^3(\twocompup)
    \end{pmatrix}=\\&=\lim_{n\rightarrow\infty}\frac{1}{|\SG_n|}\begin{pmatrix}
    \overline{D}_n^0(\twocompright)\\\overline{D}_n^1(\twocompright)\\\overline{D}_n^2(\twocompright)\\\overline{D}_n^3(\twocompright)
    \end{pmatrix}=\lim_{n\rightarrow\infty}\frac{1}{|\SG_n|}\begin{pmatrix}
    \overline{D}_n^0(\threecomp)\\\overline{D}_n^1(\threecomp)\\\overline{D}_n^2(\threecomp)\\\overline{D}_n^3(\threecomp)
    \end{pmatrix}=\begin{pmatrix}10957/40464\\22737599/87978852\\33273907/87978852\\3619595/39101712\end{pmatrix}\approx\begin{pmatrix}0.27\\0.25\\0.38\\0.10\end{pmatrix},
\end{align*}
and thus, using the relation between $\overline{D}_n^i(\cdot)$ for all $i\in\{0,1,2,3\}$ and the average height $\overline{D}_n(\cdot)$ from Equation (\ref{eq:expected-desc}), we  get
\begin{align*}
    \lim_{n\rightarrow\infty}\frac{1}{|\SG_n|}\begin{pmatrix}
    \overline{D}_n(\onecomp)\\
    \overline{D}_n(\twocompleft)\\
    \overline{D}_n(\twocompup)\\
    \overline{D}_n(\twocompright)\\
    \overline{D}_n(\threecomp)
    \end{pmatrix}=\lim_{n\rightarrow\infty}\frac{1}{|\SG_n|}\sum_{i=0}^3 i \begin{pmatrix}
    \overline{D}_n^i(\onecomp)\\
    \overline{D}_n^i(\twocompleft)\\
    \overline{D}_n^i(\twocompup)\\
    \overline{D}_n^i(\twocompright)\\
    \overline{D}_n^i(\threecomp)
    \end{pmatrix}=\frac{7259}{5616}\begin{pmatrix}1\\1\\1\\1\\1\end{pmatrix}\approx\begin{pmatrix}1.3\\1.3\\1.3\\1.3\\1.3\end{pmatrix}.
\end{align*}
{We collect the exact values of $\overline{D}_n(\onecomp),\overline{D}_n(\twocompleft),\overline{D}_n(\twocompup), \overline{D}_n(\twocompright),  \overline{D}_n(\threecomp)$ in Appendix \ref{sec:appendixComp}.}
Denote by $T(\sigma)$ the spanning tree of $\SG_n$ obtained by applying to $\sigma$ the burning algorithm. Denote by $\overline{W}_n^i(\onecomp)$ the expectation of $W_n^i$ taken over the set of recurrent sandpiles with sink given by $\tcorner{n}$ with the uniform measure. We can then obtain an expression for $\overline{W}_n^i(\onecomp)$ in terms of $\overline{D}_n^i(\onecomp)$ for all $i\in\{0,1,2,3\}$ similarly to Equation (\ref{eq:expected-desc}) by employing Lemma \ref{lem:desc-conn-to-height}:
\begin{align*}
    \overline{W}_n^i(\onecomp)&=\sum_{v\in\SG_n\backslash\{\lcorner{n},\rcorner{n},\tcorner{n}\}}\mathbb{P}(\sigma(v)=i)\\
    &=\sum_{v\in\SG_n\backslash\{\lcorner{n},\rcorner{n},\tcorner{n}\}}\sum_{j=0}^i\mathbb{P}(\sigma(v)=i|\desc{n}{T(\sigma)}{v}=j)\mathbb{P}(\desc{n}{T(\sigma)}{v}=j)\\
    &=\sum_{j=0}^i \frac{1}{4-j}\overline{D}_n^j(\onecomp).
\end{align*}
Let us further denote by $\overline{W}_n^i(\twocompright\cup\twocompup)$ the expectation of $W_n^i$ taken over the set of recurrent sandpiles with $\rcorner{n}$ and $\tcorner{n}$ as sinks, and by $\overline{W}_n^i(\threecomp)$ the expectation of $W_n^i$ taken over the set of recurrent sandpiles with all corners as sinks. Then in the same fashion we obtain the equations
\begin{align*}
    &\overline{W}_n^i(\twocompright\cup\twocompup)=\sum_{j=0}^i\frac{1}{4-j}\frac{\overline{D}_n^j(\twocompright)+\overline{D}_n^j(\twocompup)}{2},\\
    &\overline{W}_n^i(\threecomp)=\sum_{j=0}^i\frac{1}{4-j}\overline{D}_n^j(\threecomp).
\end{align*}
We can now use these relations to obtain the limit for all $\overline{W}_n^i(\cdot)$ for all $i\in\{0,1,2,3\}$ as
\begin{equation}
\begin{aligned}\label{eq:average-vertices-with-height}
    \lim_{n\rightarrow\infty}\frac{1}{|\SG_n|}\begin{pmatrix}
    \overline{W}_n^0(\onecomp)\\\overline{W}_n^1(\onecomp)\\\overline{W}_n^2(\onecomp)\\\overline{W}_n^3(\onecomp)\end{pmatrix}=\lim_{n\rightarrow\infty}\frac{1}{|\SG_n|}\begin{pmatrix}
    \overline{W}_n^0(\twocompright\cup\twocompup)\\\overline{W}_n^1(\twocompright\cup\twocompup)\\\overline{W}_n^2(\twocompright\cup\twocompup)\\\overline{W}_n^3(\twocompright\cup\twocompup)\end{pmatrix}=&\\\lim_{n\rightarrow\infty}\frac{1}{|\SG_n|}\begin{pmatrix}
    \overline{W}_n^0(\threecomp)\\\overline{W}_n^1(\threecomp)\\\overline{W}_n^2(\threecomp)\\\overline{W}_n^3(\threecomp)\end{pmatrix}=\begin{pmatrix}
        10957/161856\\649680671/4222984896\\1448254439/4222984896\\1839170699/4222984896
    \end{pmatrix}\approx\begin{pmatrix}
        0.07\\0.15\\0.34\\0.44
    \end{pmatrix}.
\end{aligned}
\end{equation}
If we now define $\overline{W}_n(\cdot)$ similarly to $\overline{D}_n(\cdot)$ as
\begin{align*}
&\overline{W}_n(\onecomp)=\mathbb{E}[W_n(\sigma)|T(\sigma)\in\onecomp]\\
&\overline{W}_n(\twocompright\cup\twocompup)=\mathbb{E}[W_n(\sigma)|
T(\sigma)\in\twocompright\cup\twocompup]\\
&\overline{W}_n(\threecomp)=\mathbb{E}[W_n(\sigma)|T(\sigma)\in\threecomp],
\end{align*}
we obtain the limit of the expected average height for sandpiles with the three different choices of the sink vertex by using Equation (\ref{eq:average-vertices-with-height})
\begin{align*}
    \lim_{n\rightarrow\infty}\frac{1}{|\SG_n|}\begin{pmatrix}
    \overline{W}_n(\onecomp)\\
    \overline{W}_n(\twocompright\cup\twocompup)\\
    \overline{W}_n(\threecomp)
    \end{pmatrix}=\lim_{n\rightarrow\infty}\frac{1}{|\SG_n|}\sum_{i=0}^3 i \begin{pmatrix}
    \overline{W}_n^i(\onecomp)\\
    \overline{W}_n^i(\twocompright\cup\twocompup)\\
    \overline{W}_n^i(\threecomp)
    \end{pmatrix}=\frac{24107}{11232}
    \begin{pmatrix}
        1 \\ 1 \\ 1
    \end{pmatrix}\approx\begin{pmatrix}
        2.15\\2.15\\2.15
    \end{pmatrix}
\end{align*}
A curious observation to be made after obtaining the average height of a recursive sandpile on the Sierpinski gasket graphs is that in the limit, as we sent the number of iterations to infinity, the average height does not  depend on the choice and the number of sink vertices. This is at first sight rather counter-intuitive, but we first want to emphasize that for all $n\in\N$, on the finite iteration graph of level $n$, the choice of the sink vertex does indeed change the value of the average height as well as the height probabilities; see the appendix for closed form expressions. When considering the recursive construction of spanning trees and forests on the finite iteration graphs in \cite{ust-on-gasket}, as trees and forests are made up of trees and forests of lower iteration gaskets, with the number of iterations going to infinity, certain statistics of spanning trees - such as the average number of neighbours that are descendants - get mixed and the number of components of the spanning forest is forgotten. This property then carries over to sandpiles and to the choice of the sink vertex via Lemma \ref{lem:desc-conn-to-height}. It would be interesting to understand if there are other statistics that cannot remember the sink vertices in the limit, or to consider graphs other than the Sierpinski gasket with different choices of sink vertices.

\section{Connection to the looping constant}\label{sec:loop-cst}

The final part is devoted to showing a connection between the average weight (or height) of the recurrent sandpiles and the expected number of neighbours of the starting vertex in a loop erased random walk. We want to emphasize  that this connection was already known. Our contribution here is the calculation of the looping constant on the Sierpinski gasket graph explicitly, using \cite{loop-conn, looping-constant-of-zd} together with our results from the previous sections. Our result concerning the looping constant differs from  \cite{loop-conn, looping-constant-of-zd} in the sense that we show a correspondence between the bulk average height and the average looping constant, where the average is taken over all vertices. This is because the Sierpinski gasket is not translation invariant, and thus the looping constant is different at different vertices.

\textbf{Loop erased random walk.}
Let $G$ be any connected graph and consider a finite path $\gamma = (x_1,\ldots,x_n)$ of length $ n\in \N$ in $G$. Define inductively: $i_1=1$,  and for $j>1$
\begin{align*}
i_j=\max\{i\leq n:x_i=x_{i_{j-1}}\}+1.
\end{align*}
The induction stops when for some $J\in \N$ we have $x_{i_J}=x_n$. We define the loop erasure of $\gamma$ as
\begin{align*}
    \LE(\gamma)=(x_{i_1},...,x_{i_J}),
\end{align*}
which is the path obtained by consecutively deleting cycles in the path $\gamma$.

Consider the Sierpi\'nski graph $\SG_n$ of level $n$ and let $C_n = \{\tcorner{n},\rcorner{n}\}$ be the right and upper corners of $\SG_n$. Let $v$ be an arbitrary vertex in $\SG_n$ and let $X_v$ be the simple random walk on $\SG_n$ started in $v$ and stopped when first visiting $C_n$. The loop-erased random walk started at $v$ is defined to be the random path with distribution given by $\LE(X_v)$. Finally, the \emph{looping constant} at vertex $v$ is defined as
\begin{align*}
    \zeta_v=\mathbb{E}\big[|\{\text{neighbours of $v$ visited by $\LE(X_v)$}\}|\big],
\end{align*}
and similarly to the heights in recurrent sandpiles, we define the bulk average looping constant by
\begin{align*}
    \zeta_n=\frac{1}{|\SG_n|}\sum_{v\in \SG_n}\zeta_v.
\end{align*}
It is a well-known fact that the path from $v$ to the roots in a uniform spanning forest with root set given by $C_n$ has as distribution the loop-erased random walk as defined above. This fact can for example be found in \cite[Chapter 4.1]{loop-walk}. We consider the bulk averages on $\SG_n$ and we prove a connection between the looping constant and expected heights of recurrent sandpiles.

For any rooted tree $T$ and two vertices $v$ and $w$, we say that $w$ is an ascendant of $v$ in $T$, if the unique path from $v$ to the root of $T$ passes along $w$, shortly $v <_T w$. Then we can rewrite
\begin{align}
    \zeta_v=\sum_{w\sim v}\mathbb{P}(v<_{T^{(2)}}w),
\end{align}
where $T^{(2)}$ is a uniformly distributed two-component forest on $\SG_n$ with root set given by $C_n$, and  this expression is similar to the expected number of descendants as calculated previously. For
\begin{align*}
    \xi_n=\frac{1}{|\SG_n|}\sum_{v\in\SG_n}{\E\big[\desc{n}{T}{v} \ | \ t \in \twoCompSetTwo{n} \cup \twoCompSetThree{n} \big] },
\end{align*}
we have
\begin{align*}
    \xi=\lim_{n\rightarrow\infty}\xi_n.
\end{align*}     
Both $\xi$ and $\xi_n$ for $n\in\N$ have been calculated before in Section \ref{sec:expHeight}, where $\xi_n$ is given by $\frac{\overline{D}_n(\twocompright)+\overline{D}_n(\twocompup)}{2}$ plus the expectation at the corner vertices. Moreover  $\zeta_n$ can also be rewritten as:
\begin{align}\label{eq:loop-const-with-edges}
    \zeta_n=\frac{1}{|\SG_n|}\sum_{v\in\SG_n}\sum_{w\sim v}\mathbb{P}(v<_{T^{(2)}} w)=\frac{1}{|\SG_n|}\sum_{\{x,y\}\in E_n}\mathbb{P}(x<_{T^{(2)}} y)+\mathbb{P}(y<_{T^{(2)}} x).
\end{align}
This observation will be used below, where we show that the bulk average number of descendants converges to the same value as the bulk average looping constant.
\begin{lem}\label{lem:loop=desc}
On $\SG_n$ we have
\begin{align*}
    \lim_{n\rightarrow\infty}\zeta_n=\xi.
\end{align*}
\end{lem}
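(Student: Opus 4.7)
The plan is to establish the stronger statement that $\zeta_n=\xi_n$ for every $n\in\N$, so that the lemma follows at once by taking limits. The argument is a double-counting over the edges of $\SG_n$, and no non-trivial analytic step is required.

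The paper already supplies the edge-representation in (\ref{eq:loop-const-with-edges}),
\begin{align*}
|\SG_n|\,\zeta_n \;=\; \sum_{\{x,y\}\in E_n}\bigl[\Prob(x<_{T^{(2)}} y)+\Prob(y<_{T^{(2)}} x)\bigr],
\end{align*}
where $T^{(2)}$ is the uniform spanning forest with sink set $C_n=\{\tcorner{n},\rcorner{n}\}$, i.e.\ the uniform distribution on $\twoCompSetTwo{n}\cup\twoCompSetThree{n}$ (depending on which component $\lcorner{n}$ falls into). On the other side, my plan is to unfold the definition $\desc{n}{T}{v}=\#\{y\sim v:\ y<_T v\}$ and swap the order of summation. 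For any fixed $T\in\twoCompSetTwo{n}\cup\twoCompSetThree{n}$,
\begin{align*}
\sum_{v\in\SG_n}\desc{n}{T}{v} \;=\; \sum_{v\in\SG_n}\sum_{y\sim v}\mathbf{1}(y<_T v) \;=\; \sum_{\{x,y\}\in E_n}\bigl[\mathbf{1}(x<_T y)+\mathbf{1}(y<_T x)\bigr],
\end{align*}
since each unordered edge $\{x,y\}$ contributes the indicator $\mathbf{1}(y<_T x)$ when the outer index is $v=x$ and $\mathbf{1}(x<_T y)$ when $v=y$. Averaging over $T$ uniform on $\twoCompSetTwo{n}\cup\twoCompSetThree{n}$ and dividing by $|\SG_n|$ produces exactly the same expression as for $\zeta_n$, so $\xi_n=\zeta_n$ for every $n$, and $\lim_{n\to\infty}\zeta_n=\lim_{n\to\infty}\xi_n=\xi$.

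The main point to verify with care is the matching of measures: the two-component forest $T^{(2)}$ arising from the LERW description in the lemma has to be the same uniform measure on $\twoCompSetTwo{n}\cup\twoCompSetThree{n}$ that is used in the definition of $\xi_n$ in Section \ref{sec:expHeight}. This is the only non-cosmetic ingredient, and it has already been invoked in the text: it is exactly the standard identification (cf.\ Wilson's algorithm) of the loop-erased random walk absorbed at a sink set with the path to the sink in the associated uniform spanning forest rooted at that set. Other than this sentence of justification, there is no real obstacle; the calculation is simply a clean swap of summation order, so I expect the write-up to consist of a one-display verification and the reference to (\ref{eq:loop-const-with-edges}).
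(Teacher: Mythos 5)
Your proposal is correct and follows essentially the same route as the paper: the paper's proof likewise rewrites $\xi_n$ as a double sum over vertices and neighbouring descendants, regroups it as a sum over edges of $\mathbb{P}(x<_{T^{(2)}}y)+\mathbb{P}(y<_{T^{(2)}}x)$, and matches it with Equation \eqref{eq:loop-const-with-edges} to conclude $\xi_n=\zeta_n$ for all $n$. Your extra remark about identifying the LERW law with the path-to-sink law in the uniform two-component forest is the same justification the paper invokes just before the lemma via the reference to Wilson's algorithm.
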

\begin{proof}
We have
\begin{align*}
    \xi_n=\frac{1}{|\SG_n|}\sum_{v\in\SG_n}\sum_{w\sim v}\mathbb{P}(w<_{T^{(2)}}v)=\frac{1}{|\SG_n|}\sum_{\{x,y\}\in E_n}\mathbb{P}(x<_{T^{(2)}} y)+\mathbb{P}(y<_{T^{(2)}} x)
\end{align*}
which together with  Equation \eqref{eq:loop-const-with-edges} yields $\xi_n=\zeta_n$,
and thus $\lim_{n\rightarrow\infty}\zeta_n=\lim_{n\rightarrow\infty}\xi_n=\xi$
and this proves the claim.
\end{proof}
Hence, using the calculations in Section \ref{sec:expHeight}, we obtain the value of the  looping constant as
\begin{align*}
    \zeta=\frac{7259}{5616}.
\end{align*}
We can finally show the connection between the bulk average sandpile height and the bulk average looping constant.

\begin{prop}\label{thm:loop-rel-to-height}
If the bulk average height is given by
\begin{align*}
    \sigma= \lim_{m\rightarrow\infty}\frac{1}{|\SG_n|}\sum_{v\in \SG_n}\mathbb{E}\big[\sigma(v)\big],
\end{align*}
then we have
\begin{align*}
    \sigma=\frac{\zeta+3}{2}.
\end{align*}
\end{prop}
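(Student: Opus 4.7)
The plan is to apply Lemma \ref{lem:desc-conn-to-height} vertex by vertex and then sum. Conditional on the number of neighboring descendants $\desc{n}{T}{v}=j$ in the uniform spanning tree/forest, the height $\eta(v)$ is uniformly distributed on $\{j,j+1,\ldots,\deg(v)-1\}$. Since every vertex $v\in\SG_n$ other than the three corner vertices has $\deg_G(v)=4$, a direct computation gives
\begin{align*}
    \E[\eta(v) \ssep \desc{n}{T}{v}=j] \;=\; \frac{1}{4-j}\sum_{k=j}^{3}k \;=\; \frac{(j+3)(4-j)}{2(4-j)} \;=\; \frac{j+3}{2}
\end{align*}
for all $j\in\{0,1,2,3\}$. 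Applying the tower property yields $\E[\eta(v)] = \tfrac{1}{2}\bigl(3+\E[\desc{n}{T}{v}]\bigr)$ for every non-corner vertex $v$.

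Next I would sum this identity over all $v\in\SG_n$ and divide by $|\SG_n|$. The three corner vertices have degree $2$ (so the formula above does not hold there), but together they contribute an additive $O(1/|\SG_n|)$ error, which vanishes as $n\to\infty$ since $|\SG_n|=\tfrac{3}{2}(3^n+1)$. Therefore
\begin{align*}
    \frac{1}{|\SG_n|}\sum_{v\in\SG_n}\E[\eta(v)] \;=\; \frac{3}{2} + \frac{1}{2}\cdot \frac{1}{|\SG_n|}\sum_{v\in\SG_n}\E\bigl[\desc{n}{T}{v}\bigr] + o(1),
\end{align*}
where $T$ is sampled from the appropriate uniform distribution dictated by the choice of sinks. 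Using the choice of sinks $C_n=\{\tcorner{n},\rcorner{n}\}$ as in the definition of $\zeta$, the second term on the right is exactly $\tfrac{1}{2}\xi_n$ in the notation of Section \ref{sec:loop-cst}.

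Finally, taking $n\to\infty$ and invoking Lemma \ref{lem:loop=desc}, which identifies $\lim_n\xi_n=\xi$ with the looping constant $\zeta$, I obtain $\sigma = \tfrac{3}{2} + \tfrac{1}{2}\zeta = \tfrac{\zeta+3}{2}$, as claimed. The only step with any content is verifying the conditional mean $(j+3)/2$; everything else is bookkeeping, with the boundary corner vertices absorbed into the vanishing error. The identity is insensitive to the choice of one, two, or three corners as sinks because by Theorem \ref{prop:height-sandpiles} the bulk average height limit is the same in all three cases.
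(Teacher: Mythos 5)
Your proposal is correct and follows essentially the same route as the paper: express $\E[\eta(v)]$ as $\tfrac{1}{2}\bigl(\E[\desc{n}{T}{v}]+\deg(v)-1\bigr)$, average over $v$, and conclude via Lemma \ref{lem:loop=desc}. The only difference is that you derive this identity directly from Lemma \ref{lem:desc-conn-to-height} (computing the conditional mean $(j+3)/2$ and applying the tower property), whereas the paper cites the analogous Lemma 8 of \cite{looping-constant-of-zd}; your version is slightly more self-contained but not a different argument.
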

\begin{proof}
It holds
\begin{align*}
    \sigma = \lim_{m\rightarrow\infty}\frac{1}{|\SG_n|}\sum_{v\in \SG_n}\frac{\mathbb{E}\big[\desc{n}{t}{v}\big]+\deg(v)-1}{2}
    =\frac{\zeta+3}{2},
\end{align*}
where the first equality follows from \cite[Lemma 8]{looping-constant-of-zd} and the second one from Lemma \ref{lem:loop=desc}.
\end{proof}

\section{Outlook and related research questions}

Our calculations and results have been made on finite  Sierpi\'nski graphs, but it is natural to ask what happens on the infinite  Sierpi\'nski graph. Does the stationary distribution (i.e. the uniform measure on recurrent configurations) of the sandpile Markov chain on $\SG_n$ converge weakly to a measure supported on the infinite Sierpi\'nski graph? The existence and uniqueness of such a measure, called \emph{the uniform volume limit measure of sandpiles} follows from the fact that  the uniform spanning tree on the infinite Sierpi\'nski graph is one-ended almost surely; see \cite{sierp-one-end,engelenburg-berestycki, engelenburg-hutchcroft} for general graphs and \cite{inf-vol-limit-sandpile} for $\Z^2$. Our results about heights and expected height can be extended to the infinite volume setting. Another interesting statistic in the context of sandpiles is the distribution of waves and avalanches during stabilization in infinite volume. If $\sigma$ is a sandpile sampled from the infinite volume measure, does $\sigma+\delta_o$ stabilize almost surely, and if so, can we describe the distribution of the avalanche, that is
$$\mathbb{P}(|\{\text{vertices toppled during stabilization of }\sigma+\delta_o\}|>R)$$ for $R\in\N$? It is believed and supported by simulations \cite{physics3} that the size of avalanches on infinite gaskets follows a power law, that is, there exists $\gamma\in(0,\infty)$ such that for all $R\in\N$ we have
\begin{align*}
    \mathbb{P}(|\{\text{vertices toppled during stabilization of }\sigma+\delta_o\}|>R)\sim R^{-\gamma}.
\end{align*}
On $\mathbb{Z}^d$, for  $d\geq3$ it has been shown in \cite{crit-exp-z^d}, that there exist $\gamma_1,\gamma_2\in(0,\infty)$ and constants $C_1,C_2>0$ such that for all $R\in\N$ we have
\begin{align*}
C_1\cdot R^{-\gamma_1}\leq \mathbb{P}(|\{\text{vertices toppled during stabilization of }\sigma+\delta_o\}|>R)\leq C_2\cdot R^{-\gamma_2}.
\end{align*}
In dimension $2$, only a lower bound has been proven.
Some bounds on the avalanche size can be given on the infinite Sierpi\'nski graph by exploiting the recursive structure of the spanning trees on it.
Another interesting question on the Sierpi\'nski gasket graphs is to study the recursive structure of recurrent sandpiles on $\SG_n$.

\appendix

\section{Collected computational results}\label{sec:appendixComp}

The probability for $k$ descendants of roots denoted as $\nRole{n}{2}, \noRole{n}{2}, \nRole{n}{3}$ defined in Section \ref{subsec:neighborsSameComp} are given by the values depicted in Table \ref{tab:rootProbs} .


Let $$m_n = \left(1,\left( \frac{3}{5} \right)^n, \left( \frac{2}{5} \right)^n, \left( \frac{1}{15} \right)^n, \left( \frac{1}{25} \right)^n, \left( \frac{2}{75} \right)^n, \left( \frac{1}{375} \right)^n \right). $$
Then the various probabilities of the cut points for the corresponding forests as elaborated in Section \ref{subsection:cut-points} are given by \\
\makebox[\textwidth][c]{
$
\begin{pmatrix}
	\Prob(\desc{n}{T}{\bcut{n}} = 0 | T \in \mathcal{T}_n) \\
	\Prob(\desc{n}{T}{\bcut{n}} = 1 | T \in \mathcal{T}_n) \\
	\Prob(\desc{n}{T}{\bcut{n}} = 2 | T \in \mathcal{T}_n)\\
	\Prob(\desc{n}{T}{\bcut{n}} = 3 | T \in \mathcal{T}_n)
\end{pmatrix}= \begin{pmatrix}
	0 & \phantom{-} \frac{605}{1176} & \phantom{-} 0 & \phantom{-} 0 & -\frac{1 375}{588} & \phantom{-} 0 & \phantom{-} \frac{3 125}{1 176} \\[3pt] 
	0 & \phantom{-} \frac{110}{147} & - \frac{605}{1 512} & \phantom{-} 0 & \phantom{-} \frac{2 375}{2 646} & \phantom{-} \frac{1 375}{1512} & - \frac{15 625}{2 646} \\[3pt]
	\frac{11}{14} & - \frac{375}{392} & \phantom{-} \frac{55}{189} & - \frac{25}{14} & \phantom{-} \frac{5 375}{1 323} & - \frac{1 375}{756} & \phantom{-} \frac{40 625}{10 584} \\[3pt]
	\frac{3}{14} & - \frac{15}{49} & \phantom{-} \frac{55}{504} & \phantom{-} \frac{25}{14} & - \frac{4 625}{1 764} &  \phantom{-} \frac{1 375}{1 512} & - \frac{3 125}{5 292}
\end{pmatrix}m_n^T,$}\\[5pt] \makebox[\textwidth][c]{
$\begin{pmatrix}
	\Prob(\desc{n}{T}{\lcut{n}} = 0 | T \in \mathcal{T}_n) \\
	\Prob(\desc{n}{T}{\lcut{n}} = 1 | T \in \mathcal{T}_n) \\
	\Prob(\desc{n}{T}{\lcut{n}} = 2 | T \in \mathcal{T}_n) \\
	\Prob(\desc{n}{T}{\lcut{n}} = 3 | T \in \mathcal{T}_n)
\end{pmatrix}= \begin{pmatrix}
	 0 & \phantom{-} \frac{605}{1176} & \phantom{-} 0 & \phantom{-} 0 & {-} \frac{1375}{588} & \phantom{-} 0 & \phantom{-} \frac{3125}{1176} \\[3pt]
	 0 & \phantom{-} \frac{110}{147} & {-} \frac{275}{378} & \phantom{-} 0 & \phantom{-} \frac{2375}{2646} & \phantom{-} \frac{625}{378} & {-} \frac{15625}{2646} \\[3pt]
	\frac{11}{14} & {-} \frac{375}{392} & \phantom{-} \frac{100}{189} & {-} \frac{20}{21} & \phantom{-} \frac{5375}{1323} & {-} \frac{625}{189} & \phantom{-} \frac{40625}{10584} \\[3pt]
	\frac{3}{14} & {-} \frac{15}{49} & \phantom{-} \frac{25}{126} & \phantom{-} \frac{20}{21} & {-} \frac{4625}{1764} & \phantom{-} \frac{625}{378} & {-} \frac{3125}{5292} 
\end{pmatrix}m_n^T,$}\\[5pt] \makebox[\textwidth][c]{
$\begin{pmatrix}
	\Prob(\desc{n}{T}{\bcut{n}} = 0 | T \in \mathcal{S}^{2}_n) \\
	\Prob(\desc{n}{T}{\bcut{n}} = 1 | T \in \mathcal{S}^{2}_n) \\
	\Prob(\desc{n}{T}{\bcut{n}} = 2 | T \in \mathcal{S}^{2}_n)\\
	\Prob(\desc{n}{T}{\bcut{n}} = 3 | T \in \mathcal{S}^{2}_n)
\end{pmatrix}= \begin{pmatrix}
	 0 & \phantom{-} \frac{121}{392} & \phantom{-} 0 & \phantom{-} 0 & {-} \frac{275}{196} & \phantom{-} 0 & \phantom{-} \frac{625}{392} \\[3pt]
	 0 & \phantom{-} \frac{22}{49} & {-} \frac{11}{252} & \phantom{-} 0 & \phantom{-} \frac{475}{882} & \phantom{-} \frac{85}{63} & {-} \frac{3125}{882} \\[3pt]
	 \frac{11}{14} & {-} \frac{225}{392} & \phantom{-} \frac{2}{63} & {-} \frac{2}{7} & \phantom{-} \frac{1 075}{441} & {-} \frac{170}{63} & \phantom{-} \frac{8125}{3528} \\[3pt]
	\frac{3}{14} & {-} \frac{9}{49} & \phantom{-} \frac{1}{84} & \phantom{-} \frac{2}{7} & {-} \frac{925}{588} & \phantom{-} \frac{85}{63} & {-} \frac{625}{1764} 
\end{pmatrix}m_n^T,$}\\[5pt] \makebox[\textwidth][c]{
$\begin{pmatrix}
	\Prob(\desc{n}{T}{\rcut{n}} = 0 | T \in \mathcal{S}^{2}_n) \\
	\Prob(\desc{n}{T}{\rcut{n}} = 1 | T \in \mathcal{S}^{2}_n) \\
	\Prob(\desc{n}{T}{\rcut{n}} = 2 | T \in \mathcal{S}^{2}_n)\\
	\Prob(\desc{n}{T}{\rcut{n}} = 3 | T \in \mathcal{S}^{2}_n)
\end{pmatrix}= \begin{pmatrix}
	0 & \phantom{-} \frac{363}{392} & \phantom{-} 0 & \phantom{-} 0 & {-} \frac{110}{392} & \phantom{-} 0 & {-} \frac{1625}{392} \\[3pt]
	0 & \phantom{-} \frac{66}{49} & {-} \frac{77}{72} & \phantom{-} 0 & \phantom{-} \frac{95}{882} & {-} \frac{25}{72} & \phantom{-} \frac{8125}{882} \\[3pt]
	\frac{11}{14} & {-} \frac{675}{392} & \phantom{-} \frac{7}{9} & {-} \frac{2}{7} & \phantom{-} \frac{215}{441} & \phantom{-} \frac{25}{36} & {-} \frac{21125}{3528} \\[3pt]
	\frac{3}{14} & {-} \frac{27}{49} & \phantom{-} \frac{7}{24} & \phantom{-} \frac{2}{7} & {-} \frac{185}{588} & {-} \frac{25}{72} & \phantom{-} \frac{1625}{1764} 
\end{pmatrix}m_n^T,$}\\[5pt] \makebox[\textwidth][c]{
$\begin{pmatrix}
	\Prob(\desc{n}{T}{\lcut{n}} = 0 | T \in \mathcal{S}^{2}_n) \\
	\Prob(\desc{n}{T}{\lcut{n}} = 1 | T \in \mathcal{S}^{2}_n) \\
	\Prob(\desc{n}{T}{\lcut{n}} = 2 | T \in \mathcal{S}^{2}_n)\\
	\Prob(\desc{n}{T}{\lcut{n}} = 3 | T \in \mathcal{S}^{2}_n)
\end{pmatrix}= \begin{pmatrix}
	0 & \phantom{-} \frac{363}{392} & \phantom{-} 0 & \phantom{-} 0 & {-} \frac{110}{392} & \phantom{-} 0 & {-} \frac{1625}{392} \\[3pt]
	0 & \phantom{-} \frac{66}{49} & {-} \frac{319}{252} & \phantom{-} 0 & \phantom{-} \frac{95}{882} & \phantom{-} \frac{25}{252} & \phantom{-} \frac{8125}{882} \\[3pt]
	\frac{11}{14} & {-} \frac{675}{392} & \phantom{-} \frac{58}{63} & \phantom{-} \frac{3}{14} & \phantom{-} \frac{215}{441} & {-} \frac{25}{126} & {-} \frac{21125}{3528} \\[3pt]
	\frac{3}{14} & {-} \frac{27}{49} & \phantom{-} \frac{29}{84} & {-} \frac{3}{14} & {-} \frac{185}{588} & \phantom{-} \frac{25}{252} & \phantom{-} \frac{1625}{1764} 
\end{pmatrix}m_n^T,$}\\[5pt] \makebox[\textwidth][c]{
$\begin{pmatrix}
	\Prob(\desc{n}{T}{\bcut{n}} = 0 | T \in \mathcal{R}^{1}_n) \\
	\Prob(\desc{n}{T}{\bcut{n}} = 1 | T \in \mathcal{R}^{1}_n) \\
	\Prob(\desc{n}{T}{\bcut{n}} = 2 | T \in \mathcal{R}^{1}_n) \\
	\Prob(\desc{n}{T}{\bcut{n}} = 3 | T \in \mathcal{R}^{1}_n)
\end{pmatrix}= \begin{pmatrix}
	0 & \phantom{-} \frac{363}{392} & \phantom{-} 0 & \phantom{-} 0 & \phantom{-} \frac{814}{392} & \phantom{-} 0 & \phantom{-} \frac{195}{392} \\[3pt]
	0 & \phantom{-} \frac{66}{49} & {-} \frac{2629}{2520} & \phantom{-} 0 & {-} \frac{703}{882} & {-} \frac{227}{168} & {-} \frac{325}{294} \\[3pt]
	\frac{11}{14} & {-} \frac{675}{392} & \phantom{-} \frac{239}{315} & \phantom{-} \frac{57}{70} & {-} \frac{1591}{441} & \phantom{-} \frac{227}{84} & \phantom{-} \frac{845}{1176} \\[3pt]
	\frac{3}{14} & {-} \frac{27}{49} & \phantom{-} \frac{239}{840} & {-} \frac{57}{70} & \phantom{-} \frac{1369}{588} & {-} \frac{227}{168} & {-} \frac{65}{588} 
\end{pmatrix}m_n^T.$
} \\[10pt]
\begin{align*}
	\begin{pmatrix}
		 \overline{D}^0_n(\onecomp) \\
		 \overline{D}^1_n(\onecomp) \\
		 \overline{D}^2_n(\onecomp) \\
		 \overline{D}^3_n(\onecomp)
	\end{pmatrix} = \frac{3^n}{2^3 281} \begin{pmatrix}
		\frac{10957}{3\cdot 2^2} \\[3pt] 
		\frac{22737599}{3^2 \cdot 13 \cdot 223} \\[3pt] 
		\frac{151 \cdot 220357}{3^2 \cdot 13 \cdot 223} \\[3pt] 
		\frac{5 \cdot 7 \cdot 19 \cdot 5443}{2^2 \cdot 13 \cdot 223}
	\end{pmatrix} - \frac{1}{2\cdot 11^2 \cdot 17} \begin{pmatrix}
		\frac{3^2 \cdot 1063}{2^2} \\[3pt]
		\frac{2120933}{3^2 \cdot 73} \\[3pt]
		\frac{1039\cdot 8111}{3^2\cdot 7\cdot 73} \\[3pt]
		\frac{5\cdot 53 \cdot 7699}{3\cdot 7 \cdot 73}
	\end{pmatrix}+ \frac{5^5\cdot 107 \cdot 375^{-n}}{2\cdot 11^2 \cdot 17 \cdot 281 } \begin{pmatrix}
		\frac{1}{2^2} \\[3pt]
		\frac{5}{3^2} \\[3pt]
		\frac{13}{2^2 \cdot 3^2} \\[3pt]
		\frac{1}{2\cdot 3^2}
	\end{pmatrix}
\end{align*} 
\begin{align*}
	\begin{pmatrix}
		\overline{D}^0_n(\onecomp) \\
		\overline{D}^0_n(\twocompleft) \\
		\overline{D}^0_n(\twocompup) \\
		\overline{D}^0_n(\twocompright) \\
		\overline{D}^0_n(\threecomp)
	\end{pmatrix} = &\begin{pmatrix}
		\frac{10957}{26976} & - \frac{9567}{16456} & \phantom{-} 0 & \phantom{-} \frac{2875}{11616} & \phantom{-} 0 & - \frac{334375}{4624136} \\[3pt]
		\frac{10957}{26976} & \phantom{-} \frac{9567}{16456} & - \frac{363}{392} & - \frac{5405}{11616} & \phantom{-} \frac{55}{196} & \phantom{-} \frac{13954125}{113291332} \\[3pt]
		\frac{10957}{26976} & \phantom{-} \frac{9567}{16456} & - \frac{363}{392} & - \frac{5405}{11616} & \phantom{-} \frac{55}{196} & \phantom{-} \frac{13954125}{113291332} \\[3pt]
		\frac{10957}{26976} & \phantom{-} \frac{9567}{16456} & - \frac{363}{392} & - \frac{5405}{11616} & \phantom{-} \frac{55}{196} & \phantom{-} \frac{13954125}{113291332} \\[3pt]
		\frac{10957}{26976} & \phantom{-} \frac{28701}{16456} & - \frac{363}{196} & \phantom{-} \frac{10603}{11616} & - \frac{99}{98} & - \frac{45504405}{226582664}
	\end{pmatrix}\begin{pmatrix}
		3^n \\
		1 \\
		\left(\frac{3}{5}\right)^n \\
		\left(\frac{3}{25}\right)^n \\
		\left(\frac{1}{25}\right)^n \\
		\left(\frac{1}{375}\right)^n
	\end{pmatrix}, \\
	\begin{pmatrix}
		\overline{D}^1_n(\onecomp) \\
		\overline{D}^1_n(\twocompleft) \\
		\overline{D}^1_n(\twocompup) \\
		\overline{D}^1_n(\twocompright) \\
		\overline{D}^1_n(\threecomp)
	\end{pmatrix} = &\begin{pmatrix}
		\frac{22747599}{58652568} & - \frac{2120933}{5405796} & \phantom{-} 0 & \phantom{-} \frac{2035}{22932} & - \frac{101875}{426888} & \phantom{-} 0 & - \frac{175375}{28716156} & \phantom{-} \frac{1671875}{10404306} \\[3pt]
		\frac{22737599}{58652568} & \phantom{-} \frac{2120933}{5405796} & - \frac{66}{49} & \phantom{-} \frac{1529}{2548} & \phantom{-} \frac{191525}{426888} & - \frac{95}{882} & -\frac{960865}{9572052} & -\frac{23256875}{84968499} \\[3pt]
		\frac{22737599}{58652568} & \phantom{-} \frac{2120933}{1801932} & - \frac{132}{49} & \phantom{-} \frac{6017}{7644} & - \frac{375715}{426888} & \phantom{-} \frac{19}{49} & \phantom{-} \frac{1238333}{3190684} & \phantom{-} \frac{25280225}{56645666}
	\end{pmatrix}\begin{pmatrix}
		3^n \\
		1 \\
		\left(\frac{3}{5}\right)^n \\
		\left(\frac{2}{5}\right)^n \\
		\left(\frac{3}{25}\right)^n \\
		\left(\frac{1}{25}\right)^n \\
		\left(\frac{2}{75}\right)^n \\
		\left(\frac{1}{375}\right)^n
	\end{pmatrix},\\
		\begin{pmatrix}
		\overline{D}^2_n(\onecomp) \\
		\overline{D}^2_n(\twocompleft) \\
		\overline{D}^2_n(\twocompup) \\
		\overline{D}^2_n(\twocompright) \\
		\overline{D}^2_n(\threecomp)
	\end{pmatrix} = &\begin{pmatrix}
		\frac{33273907}{58652568} & - \frac{8427329}{18920286} & \phantom{-} 0 & - \frac{370}{5733} & \phantom{-} \frac{5}{21} & - \frac{43375}{213444} & \phantom{-} 0 & \phantom{-} \frac{175375}{14358078} & - \frac{4346875}{41617224}\\[3pt]
		\frac{33273907}{58652568} & - \frac{18085244}{9460143} & \phantom{-} \frac{675}{392} & - \frac{278}{637} & - \frac{3}{14} & \phantom{-} \frac{81545}{213444} & - \frac{215}{441} & \phantom{-} \frac{960865}{4786026} & \phantom{-} \frac{60467875}{339873996} \\[3pt]
		\frac{33273907}{58652568} & - \frac{3043507}{900966} & \phantom{-} \frac{675}{196} & - \frac{1094}{1911} & \phantom{-} 0 & - \frac{159967}{213444} & \phantom{-} 0 & - \frac{1238333}{1595342} & - \frac{65728585}{226582664}
	\end{pmatrix}\begin{pmatrix}
		3^n \\
		1 \\
		\left(\frac{3}{5}\right)^n \\
		\left(\frac{2}{5}\right)^n \\
		\left(\frac{1}{15}\right)^n \\
		\left(\frac{3}{25}\right)^n \\
		\left(\frac{1}{25}\right)^n \\
		\left(\frac{2}{75}\right)^n \\
		\left(\frac{1}{375}\right)^n
	\end{pmatrix}
\end{align*}
Finally for the expected total number of descendants of Section \ref{sec:expHeight} we obtain
{
\begin{align*}
\begin{pmatrix}
    \overline{D}_n(\onecomp)\\
    \overline{D}_n(\twocompleft)\\
    \overline{D}_n(\twocompup)\\
    \overline{D}_n(\twocompright)\\
    \overline{D}_n(\threecomp)
\end{pmatrix}&= \begin{pmatrix}
	\frac{7259}{3744} & {-} \frac{769}{504} & \phantom{-} 0 & {-} \frac{185}{1638} & {-} \frac{125}{2016} & {-} \frac{5}{21} & \phantom{-} 0\\[3pt] 
	\frac{7259}{3744} & {-} \frac{2579}{504} & \phantom{-} \frac{15}{4} & {-} \frac{139}{182} & \phantom{-} \frac{235}{2016} & \phantom{-} \frac{3}{14} & {-} \frac{5}{36}\\[3pt]
 \frac{7259}{3744} & {-} \frac{2579}{504} & \phantom{-} \frac{15}{4} & {-} \frac{139}{182} & \phantom{-} \frac{235}{2016} & \phantom{-} \frac{3}{14} & {-} \frac{5}{36}\\[3pt]
 \frac{7259}{3744} & {-} \frac{2579}{504} & \phantom{-} \frac{15}{4} & {-} \frac{139}{182} & \phantom{-} \frac{235}{2016} & \phantom{-} \frac{3}{14} & {-} \frac{5}{36}\\[3pt]
    \frac{7259}{3744} & {-} \frac{209}{24} & \phantom{-} \frac{15}{2} & {-} \frac{547}{546} & {-} \frac{461}{2016} & \phantom{-} 0 & \phantom{-} \frac{1}{2}\\[3pt]
\end{pmatrix}\begin{pmatrix}
    3^n \\
    1 \\
    \left(\frac{3}{5}\right)^n \\
    \left(\frac{2}{5}\right)^n \\
    \left(\frac{3}{25}\right)^n \\
    \left(\frac{1}{15}\right)^n \\
    \left(\frac{1}{25}\right)^n
\end{pmatrix}.
\end{align*}

}

\textbf{Funding information.} 
The research of R.~Kaiser and E.~Sava-Huss was funded in part by the Austrian Science Fund (FWF) 10.55776/P34129. For open access purposes, the authors have applied a CC BY public copyright license to any
author-accepted manuscript version arising from this submission.

\textbf{Acknowledgments.} We are very grateful to the two anonymous referees for their suggestions and positive criticism, which substantially improved the quality and the presentation of the paper.

\bibliographystyle{alpha}
\bibliography{lit}

\textsc{Nico Heizmann}, Department of Mathematics, Chemnitz University of Technology, Germany\\
\texttt{nico.heizmann@math.tu-chemnitz.de}

\textsc{Robin Kaiser}, Institut für Mathematik, Universität Innsbruck, Austria.\\
\texttt{Robin.Kaiser@uibk.ac.at}

\textsc{Ecaterina Sava-Huss}, Institut für Mathematik, Universität Innsbruck, Austria.\\
\texttt{Ecaterina.Sava-Huss@uibk.ac.at}
\end{document}